\documentclass{amsart}
\usepackage{verbatim,amssymb,amsmath,amscd,latexsym,amsbsy,mathrsfs} 
\usepackage{graphicx}
\input{xy}
\xyoption{all}
\usepackage{color}

\newtheorem{thm}{Theorem}[section]
\newtheorem{cor}[thm]{Corollary}

\newtheorem{rmk}[thm]{Remark}

\newtheorem{quest}[thm]{Question}

\DeclareMathOperator*{\coker}{coker} 
\DeclareMathOperator*{\rank}{rank} 

\newcommand{\RR}{\mathbb{R}}

\newcommand{\PP}{\mathbb{P}}

\newcommand{\cP}{\mathcal{P}}

\newcommand {\C} {{\mathbb C}}
\newcommand {\R} {{\mathbb R}}
\newcommand {\Z} {{\mathbb Z}}

\newcommand {\OO} {{\mathcal O}}

 \newtheorem{lemma}{Lemma}[section]
 \newtheorem{prop}{Proposition}[section]

\numberwithin{equation}{section}

 \begin{document}
\title{ When are braid groups of manifolds K\"ahler?}
\author{
        Donu Arapura    
}
 \thanks {Partially supported by a grant from the Simons Foundation}
\address{Department of Mathematics\\
Purdue University\\
West Lafayette, IN 47907\\
U.S.A.}
 \maketitle

\begin{abstract}
The main result is that, with two trivial exceptions, the pure braid
group  of a Riemann surface  with at least 2  strands is not K\"ahler,
 i.e. it is not the fundamental group of a compact
K\"ahler manifold. This deduced with the help of some homological
properties of these groups established beforehand. 
The braid group of a projective manifold of complex dimension 2 or more is shown to be K\"ahler.
\end{abstract}

Some years ago, the author \cite{arapura} observed that a pure Artin
braid group $P_n$ is not K\"ahler, i.e. it is not the fundamental group of a
compact K\"ahler manifold. This was deduced from a result of Bressler,
Ramachandran and the author \cite{abr} which showed that
 a K\"ahler group cannot be too big. More precisely, a K\"ahler group
 cannot be an extension of a group with infinitely many ends by a
 finitely generated group.
In this follow up, we have
tried to determine whether or not braid groups of some other manifolds
are K\"ahler. We recall that the $n$-strand braid (respectively pure braid) group
 $B_n(X)$ ($P_n(X)$) of a manifold $X$ is the fundamental group of the
configuration space of $n$  distinct unordered (respectively ordered) 
points of $X$.  When $X=\R^2$, these are the usual Artin braid groups. The first  result of this paper,
theorem~\ref{thm:abPn}, studies homological properties of the 
natural homomorphism $J_n:P_n(X)\to \pi_1(X)^n$, when $X$ is a compact
oriented surface with genus $g\ge 1$.
 Part A  of the theorem shows that $J_n$ induces an isomorphism of
 abelianizations  $H_1(P_n(X))\cong H_1(\pi_1(X)^n)$, part B gives a similar statement
with local coefficients when $g\ge 2$, and part C shows the pullback of certain
higher cohomology classes to $P_n(X)$ vanish. The theorem has a number
of purely group theoretic corollaries. Corollary \ref{cor:SigmaPg},
which follows from part B, shows that $P_n(X)$
cannot surject  onto the fundamental group of a
surface of genus larger than $g$, when $g\ge 2$. The theorem is proved by
analyzing the Leray spectral sequence for the inclusion of the ordered
configuration space into $X^n$.

It is known that the pure spherical  braid group $P_n(\PP^1)$ is trivial when $n=2$,
 and nontrivial but finite when $n=3$.
Theorem~\ref{thm:hypRS}, which is our main result,
 says that, with these two exceptions, a pure braid group of  a Riemann surface
 with at least  $2$ strands is  never K\"ahler.  For some cases, such as  when $X$ is
noncompact  and hyperbolic,
the argument is essentially the same as for Artin braid groups. When
$X$ is compact with postive genus $g$, however, the proof is completely different.
In outline when $g\ge 2$ and $n=2$,
suppose that $M$ were a compact K\"ahler manifold with $\pi_1(M)=P_2(X)$.
A theorem of
Beauville-Catanese-Siu would show that $J_2$ is realized by a holomorphic map $f$
of $M$ to  a product $Y_1\times Y_2$ of  a pair of Riemann surfaces of genus $g$. Part A
of theorem~\ref{thm:abPn} tells us that $f^*H^1(Y_1\times Y_2)$ equals
$H^1(M)$. Part C when combined with corollary  \ref{cor:SigmaPg} would imply that
$Y_1=Y_2$ and that $f$ would factor
through  the diagonal. But this would force 
$f^*H^1(Y)$  to be strictly  smaller than $H^1(M)$, causing  a contradiction.

In the positive direction, we show that when $X$ is a projective
manifold of complex dimension 2 or more, $B_n(X)$ is the fundamental group of a
projective manifold, for any $n$. In this case, $B_n(X)$ has
a very simple structure: it is the wreath
product $\pi_1(X)\wr S_n$, or in other words, it is the semidirect
product $\pi_1(X)^n\ltimes S_n$. As a consequence, we see that the class of
fundamental groups of projective manifolds is closed under taking
these kinds of wreath products. This includes an old result of Serre
that all finite groups arise as fundamental groups of projective
manifolds. Remark \ref{rmk:Serre} gives  a bit more explanation of
how this relates to previous arguments. Although this paper is mainly
about group theory and complex geometry, the last section discusses
some (potential) analogues in positive characteristic.

Manifolds, in this paper, are  assumed to be connected, unless stated otherwise.
Base points will be generally omitted. 
Since we will switch between real and complex manifolds, we will try to be
clear on what kind of dimension we mean.

\section{Some homological properties of braid groups}

Given a  manifold (or sufficiently nice space) $ X$ let 
$$\Delta_{n,ij}(X) =
\{(x_1,\ldots, x_n)\in X^n\mid 
x_i=x_j\},$$
 and $\Delta_n(X)= \bigcup \Delta_{n,ij}(X)$. We will write
$\Delta_{ij}=\Delta_{n,ij}(X)$ and $\Delta=\Delta_{n}(X)$ if $n$ and $X$ are understood.
 Let $C_n(X)=  X^n-\Delta_n(X)$ and $SC_n(X)=C_n(X)/S_n$
denote the associated  configuration spaces. The  braid group on
$n$-strands for $X$ is defined by $B_n(X)= \pi_1(SC_n(X))$, and the pure braid group by
$P_n(X)= \pi_1(C_n(X))$. We have the standard exact sequence
$$1\to P_n(X)\to B_n(X)\to S_n\to 1$$
so the two groups are closely related.  Fadell and Neuwirth \cite{fn} showed that the
various projections $C_n(X)\to C_m(X)$ are fibrations. Let us assume
that the base point (which will be suppressed
in the notation) is  $(x_1,\ldots, x_n)$. Then we get an exact
sequence
\begin{equation}
  \label{eq:FN}
\ldots \pi_2(C_{m}(X))\to  P_{n-m}(X-\{x_{1},\ldots, x_m\})\to P_n(X)\to P_{m}(X)\to 1  
\end{equation}

An easy
induction  with this sequence shows that:

\begin{lemma}\label{lemma:FN}
If for any finite set $S$ and $1\le i\le N$, $\pi_i(X-S)=0$,
then $\pi_i(C_n(X))=0$ for $1\le i\le N$. In particular, \eqref{eq:FN}
gives a short exact sequence of fundamental groups.
\end{lemma}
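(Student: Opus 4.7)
The plan is to run the induction on $n$ promised by the author, using the Fadell--Neuwirth fibration. The base case is $n=1$, where $C_1(X)=X$, and the hypothesis applied to the empty set $S=\emptyset$ yields $\pi_i(X)=0$ for $1\le i\le N$, as required.

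For the inductive step, I would fix the fibration $C_n(X)\to C_{n-1}(X)$ (projection onto the first $n-1$ coordinates) due to Fadell--Neuwirth, whose fiber over the base point is $X-\{x_1,\ldots,x_{n-1}\}$. Writing a segment of the associated long exact sequence of homotopy groups,
\[
\pi_i(X-\{x_1,\ldots,x_{n-1}\})\longrightarrow \pi_i(C_n(X))\longrightarrow \pi_i(C_{n-1}(X)),
\]
I observe that the left term vanishes by the standing hypothesis on $X$ minus finite sets and the right term vanishes by the inductive hypothesis; hence $\pi_i(C_n(X))=0$ for all $1\le i\le N$. This completes the induction.

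For the ``in particular'' clause, I would specialize to $i=2$ (this is why one needs $N\ge 2$ in the hypothesis for this part to be meaningful) and apply the part just proved to conclude $\pi_2(C_m(X))=0$. The portion of the long exact sequence \eqref{eq:FN} immediately to the left of $P_{n-m}(X-\{x_1,\ldots,x_m\})$ is then zero, so the map $P_{n-m}(X-\{x_1,\ldots,x_m\})\to P_n(X)$ is injective, producing a short exact sequence
\[
1\to P_{n-m}(X-\{x_1,\ldots,x_m\})\to P_n(X)\to P_m(X)\to 1.
\]

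There is no real obstacle here; the only point requiring a little care is bookkeeping the range of $i$, to make sure both ends of the fibration exact sequence lie in the vanishing range throughout the induction, and to note that the hypothesis must include $i=2$ for the short exact sequence conclusion.
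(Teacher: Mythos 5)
Your proof is correct and is exactly the ``easy induction'' the paper alludes to without writing out: induct on $n$ via the Fadell--Neuwirth fibration $C_n(X)\to C_{n-1}(X)$, whose fibre is $X$ minus a finite set, and read off the vanishing of $\pi_i(C_n(X))$ from the long exact sequence of homotopy groups. Your bookkeeping remark that the short-exact-sequence conclusion requires $N\ge 2$ (so that $\pi_2(C_m(X))=0$ kills the connecting term in \eqref{eq:FN}) is the right point to flag.
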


 The inclusion $j:C_n(X)\to X^n$ induces a homomorphism
 $J_n:P_n(X)\to \pi_1(X^n)=\pi_1(X)^n$. When $X$ is a manifold of (real) dimension at least
 $2$, Birman \cite[thm 1.5]{birman} showed that $J_n$ is surjective. 
 
  In the following, we let
 $X_g$ denote {\em the} compact oriented real $2$-manifold of genus
 $g$. Let $P_n(g) = P_n(X_g)$, $B_n(g)=B_n(X_g)$ and
 $\Pi_g=\pi_1(X_g)$. We have the standard presentation
$$\Pi_g = \langle \alpha_1,\ldots, \alpha_{2g}\mid
[\alpha_1,\alpha_2]\ldots [\alpha_{2g-1},\alpha_{2g}]=1\rangle$$
We will usually assume $g>0$ below, so that $X_g^n$ is a
$K(\Pi_g^n,1)$, and by the above lemma, $C_n(X_g)$
is a $K(P_n(g),1)$. 
 Consequently, the (co)homology of $P_n(g)$ and $C_n(X_g)$
 with local coefficients are the same. 
Recall that a local system on a connected space $T$ can be viewed
as $\pi_1(T)$-module or a locally constant sheaf on $T$.

\begin{thm}\label{thm:abPn}
 Suppose that $n\ge 2$ and $g\ge 0$ are  integers.
 \begin{enumerate}
 \item[(A)] If $g>0$, $J_n$ induces an isomorphism of abelianizations
 $$H_1(P_n(g))\cong H_1(\Pi_g)^n$$
Otherwise, if $g=0$, $H_1(P_n(0))$ is free of rank $\binom{n}{2}-n$ if
$n\ge 3$.

\item[(B)] If $g\ge 2$, then for any irreducible $\C[\Pi_g^n]$-module $V$,
  $J_n$ induces an isomorphism
$$ H^1(\Pi_g^n, V)\cong H^1(P_n(g), V)$$
When  $g=1$ and $V$ is a one dimensional $\C[\Pi_1^n]$-module, 
$\dim H^1(P_n(1),V)$ is given by the combinatorial formula \eqref{eq:dimH1Pi1} explained below.

\item[(C)] 
Let $g>0$ and $2\le m\le n$ and let $\pi:P_n(g)\to  \Pi_g^m$ be given
by composing $J_n$ with some projection onto $m$ factors.
Then $\pi^*:H^{2m}(\Pi_g^m)\to H^{2m}(P_n(g))$ is zero.

 \end{enumerate}
 
\end{thm}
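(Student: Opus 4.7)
My plan is to analyze $j\colon C_n(X_g) \hookrightarrow X_g^n$ via the Leray spectral sequence (equivalently, the long exact sequence of the pair in low degrees). The geometric input is that $\Delta := X_g^n \setminus C_n(X_g) = \bigcup_{i<j} \Delta_{ij}$ is a union of smooth real codimension $2$ submanifolds whose deeper strata $\Delta_{ijk},\ldots$ have codimension $\ge 4$. Hence for any local system $\tilde V$ on $X_g^n$ pulled back via $J_n$, the Thom isomorphism gives $H^k_\Delta(X_g^n,\tilde V) = 0$ for $k \le 1$ and $H^2_\Delta(X_g^n,\tilde V) \cong \bigoplus_{i<j} V^{\pi_1(\Delta_{ij})}$, with connecting map to $H^2(X_g^n,\tilde V)$ the Gysin pushforward; note $\tilde V$ extends unramified across each diagonal since the loop in $C_n(X_g)$ around $\Delta_{ij}$ is killed by $J_n$.

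For part (A) with $\tilde V = \C$, K\"unneth decomposes $[\Delta_{ij}] = p_i^*[X_g] + p_j^*[X_g] + \eta_{ij}$ in $H^2(X_g^n)$, where $\eta_{ij} \in H^1(X_g)_i \otimes H^1(X_g)_j$ is the symplectic form on $H^1$. For $g \ge 1$ the $\eta_{ij}$'s are nonzero and lie in distinct K\"unneth summands, so the Gysin map $\C^{\binom{n}{2}} \hookrightarrow H^2(X_g^n)$ is injective and $H^1(X_g^n) \cong H^1(C_n(X_g))$; passing to $H_1$ yields the abelianization isomorphism. For $g = 0$ the $\eta_{ij}$ vanish, and direct computation of the kernel of $\C^{\binom{n}{2}} \to \C^n$, $G_{ij} \mapsto h_i + h_j$, gives dimension $\binom{n}{2} - n$ when $n \ge 3$.

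For part (C), the key is that $j^*[\Delta_{ij}] = 0$ in $H^2(C_n(X_g))$ (Poincar\'e duals of removed submanifolds are cohomologous to zero in the complement), which yields the relation $\omega_i + \omega_j + \eta_{ij} = 0$ in $H^2(C_n(X_g))$ with $\omega_k := p_k^*[X_g]$. Substituting $\omega_1 = -\omega_2 - \eta_{12}$ into $\pi^*\mu = j^*(\omega_1 \omega_2 \cdots \omega_m)$ gives
\[
\pi^*\mu \;=\; -\,j^*(\omega_2^2\, \omega_3 \cdots \omega_m) \;-\; j^*(\eta_{12}\,\omega_2\, \omega_3 \cdots \omega_m) \;=\; 0,
\]
since $\omega_2^2 \in H^4(X_g)_2 = 0$ and $\omega_2 \cdot e^* \in H^3(X_g)_2 = 0$ for every $e^* \in H^1(X_g)_2$ appearing in $\eta_{12}$.

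Part (B) is the most delicate. For an irreducible finite-dimensional $\C[\Pi_g^n]$-module $V$, the standard Schur-commutant argument gives $V \cong V_1 \otimes \cdots \otimes V_n$ with each $V_k$ an irreducible $\Pi_g$-module. Computing the $\pi_1(\Delta_{ij})$-invariants via the diagonal identification shows
\[
V^{\pi_1(\Delta_{ij})} \;=\; \bigotimes_{k \ne i,j} V_k^{\Pi_g} \otimes (V_i \otimes V_j)^{\Pi_g,\,\mathrm{diag}},
\]
which by Schur vanishes unless all $V_k$ for $k \ne i,j$ are trivial and $V_i \cong V_j^*$; the fully trivial case reduces to part (A). In each remaining case a single one-dimensional summand survives, and the main obstacle is showing that its Gysin image is nonzero in $H^2(\Pi_g^n, V) = H^1(\Pi_g, V_i) \otimes H^1(\Pi_g, V_j)$. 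I would verify this by identifying the image as the duality element of the perfect cup-pairing $H^1(\Pi_g, V_i) \otimes H^1(\Pi_g, V_j) \to H^2(\Pi_g, V_i \otimes V_j) \xrightarrow{\mathrm{tr}} \C$ (using $V_i \otimes V_j \cong \mathrm{End}(V_j)$), which is non-degenerate for $g \ge 2$ by Poincar\'e duality on $\Pi_g$. For $g = 1$, the same framework applied to one-dimensional characters of $\Pi_1^n = \Z^{2n}$ enumerates contributing pairs $(i,j)$ and the surviving cokernels in $H^2(\Pi_1^n, V)$, producing the formula \eqref{eq:dimH1Pi1}.
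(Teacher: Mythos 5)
Your overall strategy for parts (A) and (B) --- computing $H^1$ of $C_n(X_g)$ from the localization/Leray sequence of $j\colon C_n(X_g)\hookrightarrow X_g^n$, with the K\"unneth decomposition of the diagonal classes driving injectivity of the Gysin map, and Schur's lemma plus Poincar\'e duality handling the local coefficients --- is essentially the paper's. Your part (C), however, is a genuinely different and valid argument: the paper factors $\pi^*$ through $H^{2m}(C_m(X_g))$, which vanishes because $C_m(X_g)$ is a connected noncompact oriented $2m$-manifold, whereas you use the relation $j^*[\Delta_{12}]=0$ to rewrite $\omega_1$ and then kill each term by degree reasons on a single factor. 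Both work; the paper's is shorter, yours is more explicit and stays inside the cohomology ring of $X_g^n$.

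There is a genuine gap in your part (A). The statement concerns integral abelianizations, and for $g=0$ it asserts that $H_1(P_n(0))$ is \emph{free}. Injectivity of the Gysin map in degree $2$ (which you run with $\C$ coefficients) only yields $H^1(C_n(X_g))\cong H^1(X_g^n)$, and by universal coefficients $H^1$ sees only $H_1$ modulo torsion; since $J_n$ is surjective on $H_1$, what you get is $H_1(P_n(g))\cong \Z^{2gn}\oplus T$ with $T$ finite, and you have not shown $T=0$. The paper closes this by proving $H^2(C_n(X_g);\Z)$ is torsion-free: $E_\infty^{02}$ and $E_\infty^{11}$ are manifestly torsion-free, and $\coker d_2$ is torsion-free (for $g>0$ because each $[\Delta_{ij}]$ has a primitive component $\eta_{ij}$ lying in a K\"unneth summand not met by the other diagonal classes; for $g=0$, $n\ge 3$ because $d_2$ is onto). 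Some such integral step is indispensable.

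In part (B), note that your own (correct) formula $V^{\pi_1(\Delta_{ij})}=\bigotimes_{k\ne i,j}V_k^{\Pi_g}\otimes (V_i\otimes V_j)^{\Pi_g}$ does not lose the factors $V_k^{\Pi_g}$ when $g=1$, so carrying out ``the same framework'' there yields $\dim H^1(P_n(1),\C_\rho)=\#\{(i,j)\mid \rho_i\rho_j=1 \text{ and } \rho_k=1 \text{ for all } k\ne i,j\}$, which is \emph{not} \eqref{eq:dimH1Pi1}; you cannot simply assert that the computation produces that formula. (A cross-check via the splitting $C_n(E)\cong E\times C_{n-1}(E\setminus\{0\})$, under which the first factor carries the character $\prod_i\rho_i$, shows $H^1(P_n(1),\C_\rho)=0$ whenever $\prod_i\rho_i\ne 1$, so the extra condition is really present and the discrepancy lies with \eqref{eq:dimH1Pi1} as printed; either way your write-up must confront it rather than claim to reproduce it.) Finally, a small point for $g\ge 2$: nondegeneracy of the cup pairing does not by itself make your ``duality element'' nonzero --- you also need $H^1(\Pi_g,V_i)\ne 0$, which follows from $\chi(\Pi_g,V_i)=2\dim V_i(1-g)<0$, as the paper records in \eqref{eq:chiXg}.
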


Part A is a bit surprising at first glance. The kernel of $J_n$ is
quite large. It contains loops around the diagonals $\Delta_{ij}$.
These classes will span the abelianization of the spherical braid
group $P_n(0)$, but  the theorem says that they will not contribute to
$H_1(P_n(g))$ for positive $g$.  Part B will have some group theoretic
consequences given in the corollaries. We now explain the missing formula for $\dim H^1(P_n(1),V)$.
Let
$$Char(\Pi_1^n) := Hom(\Pi_1^n, \C^*) =\prod_1^n Char(\Pi_1,\C^*)\cong \prod_1^n (\C^*)^2$$
Given a character $\rho\in Char(\Pi_1^n) $,
let $\C_\rho$ denote the corresponding $\C[\Pi_1^n]$-module.
We can decompose  $\rho=(\rho_1,\rho_2,\ldots)$ as above, where $\rho_i\in Char(\Pi_1,\C^*)$.
Then
\begin{equation}
  \label{eq:dimH1Pi1}
\dim H^1(P_n(1), \C_\rho) = \#\{(i,j)\mid 0\le i< j\le n, \rho_i\rho_j=1\}  
\end{equation}

\begin{proof}[Proof of theorem \ref{thm:abPn}]
Set $X=X_g$.
We  reformulate part A cohomologically. Recall, that  if $T$ is a space with
finitely generated first homology,  the universal coefficient theorem
gives isomorphisms
$$H^1(T)\cong Hom(H_1(T), \Z)$$
$$H^2(T)_{torsion} \cong H_1(T)_{torsion}$$
Here we use integral coefficients.
The first isomorphism is canonical, but  the second is not. In view of this, it
suffices to prove that
\begin{equation}
  \label{eq:H1Pn}
H^1(P_n(X))\cong H^1(X)^n  
\end{equation}
and that
$H^2(P_n(X))$
is torsion free.  

We first give the proof of A and B  when $n=2$. This  will explain the
main ideas without a lot of homological baggage.
  We have a Gysin sequence
$$\ldots H^{i-2}(\Delta)\to H^i(X^2)\to H^i(X^2-\Delta)\to
H^{i-1}(\Delta)\ldots $$
When $i=1$, we can write this as
$$0\to H^1(X^2)\to H^1(X^2-\Delta) \to \Z  \stackrel{d}{\to}  H^2(X^2)$$
where $1\in \Z$ is sent to the fundamental class $[\Delta]$
under $d$. Since this class is nonzero, $d$ is injective. Therefore
\eqref{eq:H1Pn} holds. The Gysin sequence also shows that any possible
torsion in $H^2(X^2-\Delta)$ must come from $\coker d$. Since the
intersection number $\Delta\cdot (X\times {x_1})=1$, $[\Delta]$ is
primitive, i.e. not  divisible by  $m>1$. Therefore
$\coker d$ is torsion free. This proves A.

For B, we can assume that $V$ is nontrivial. As is well known \cite[prop 2.3.23]{kowalski}, we can decompose
$V=V_1\boxtimes V_2=\pi_1^*V_1\otimes \pi_2^*V_2$, where $V_i$ are irreducible
$\C[\Pi_g]$-modules.
One again has a Gysin sequence
$$0\to H^1(X^2, V)\to H^1(X^2-\Delta, V)\to H^0(\Delta,
V_1\otimes V_2)\stackrel{\gamma}{\to} H^2(X^2, V)$$
If $V_1\otimes V_2 \not=1$, then $H^0(\Delta, V_1\otimes V_2)=0$ by Schur's lemma, so
we are done in this case. Now suppose that $V_2\cong V_1^{\vee}$ and nontrivial.
We split into  subcases depending on $g$. First, suppose that $g=1$,
and $V$ is nontrivial one dimensional.
 Then $H^i(X^2,V)=0$.
Therefore 
$$ H^1(X^2-\Delta, V)\stackrel{\sim}{\to}H^0(\Delta, \C)\cong \C$$
Now assume that $g\ge 2$.
The Gysin map $\gamma$ is Poincar\'e dual to restriction as in the  usual
setting. The component 
$$\gamma_{11}: \C=H^0(\Delta, V_1\otimes V_2)\to
H^1(X, V_1)\otimes H^1(X, V_1^\vee)$$
is dual to the cup product pairing
$$H^1(X, V_1)\otimes H^1(X, V_1^\vee)\to H^2(X, \C)=\C$$
which is perfect by Poincar\'e duality.  It is also nonzero, because
\begin{equation}
  \label{eq:chiXg}
  \begin{split}
  -\dim H^1(X, V_1) &=\dim H^0(X, V_1)-\dim H^1(X, V_1) +\dim
H^2(X,V_1)\\
& = 2\dim V_1(1-g) <0
\end{split}
\end{equation}
This proves B.

When $n>2$, the argument is basically the same, except that we use
the Leray spectral sequence for sheaf cohomology
$$E_2^{pq} = H^p(X^n, R^qj_*\Z)\Rightarrow H^{p+q}(C_n(X))$$
Totaro \cite[pp 1060-1062]{totaro} gave a more down to earth
description of the initial term, and the first nontrivial differential,
which is $d_2$ in our case. Here we just describe the part that we need.
We have isomorphisms
\begin{equation}
  \label{eq:leray}
  \begin{split}
E_\infty^{10}=E_2^{10}&= H^1(X^n)\\
  E_2^{01} &= \bigoplus_{1\le i< j\le  n} \Z G_{ij}\\
E_2^{20} &= H^2(X^n)
\end{split} 
\end{equation}
where $G_{ij}$ are basis vectors. The differential $d_2:E_2^{01}\to E_2^{20}$ sends
$G_{ij}\mapsto [\Delta_{ij}]$.
Consider the  K\"unneth decomposition 
\begin{equation}
  \label{eq:kunneth}
H^2(X^n)= \bigoplus_i \pi_i^*H^2(X) \oplus \bigoplus_{i<j}
\pi_i^*H^1(X)\otimes \pi_j^*H^1(X)
\end{equation}
When $g=0$, the sum reduces to a sum of $n$ copies of $H^2(X)\cong
\Z$. Let $e_i$ denote the positive generator on the $i$th copy. Under Gysin,
$[\Delta_{ij}]$ maps to $e_i+e_j$. Therefore  $d_2:E_2^{01}\to
E_2^{20}$  can be identified with a homomorphism
$$\Z^{\binom{n}{2}}\to \Z^n$$
which can be seen to be surjective since we assumed that $n\ge 3$.
So \eqref{eq:H1Pn} is verified in this case. Now suppose that $g>0$.
Then under Gysin,
 $[\Delta_{ij}]$ has nonzero image in the $ij$th summand of the second
 sum of \eqref{eq:kunneth}.
This implies that $d_{2}$ is injective. Therefore
\eqref{eq:H1Pn} holds in the second case. The remainder of the proof of A,
treats $g=0$ and $g>0$ simultaneously. The associated graded of
$H^2(C_n(X))$ is a sum of
$$E_\infty^{02} \subseteq\bigoplus_{i<j<k}  \Z^b$$
$$E_\infty^{11}\subseteq  \bigoplus_{i<j} H^1(X)$$
$$ E_\infty^{20}=\coker d_2: E_2^{01}\to E_2^{20}$$
where the exponent $b$ is the second Betti number of $C_3(\R^2)$ 
(which is complicated way of saying $b=3$). The key point for us is that the  first two terms $E_\infty^{02}, E_\infty^{11}$
are manifestly torsion free. 
The third term $ E_\infty^{20}$ is also
torsion free by an argument similar to the one used in the second paragraph.
Since all of the associated graded groups are torsion free, it follows
that $H^2(C_n(X))$ is torsion free. So A is proved.

For B, we can decompose $V= V_1\boxtimes V_2,\ldots$ as above. We can
assume that $V$ is nontrivial.
We use the spectral sequence
$$E_2^{pq} = H^p(X^n, R^qj_*j^*V) \Rightarrow H^{p+q}(
C_n(X),V)$$
The relevant terms can be simplified, as  Totaro did, to 
$$E_\infty^{10}=E_2^{10}= H^1(X^n,V)$$
$$E_2^{01}= \bigoplus_{1\le i< j\le  n} H^0(\Delta_{ij}, V_i\otimes V_j)$$
$$E_2^{20} = H^2(X^n, V) $$
When $g=1$, $H^i(X,V)=0$, and
$$H^0(\Delta_{ij}, V_i\otimes V_j)=
\begin{cases}
  \C & \text{if } V_i\cong V_j^\vee\\
0 & \text{otherwise}
\end{cases}
$$
This implies that
\begin{equation*}
\dim H^1(P_n(1), V) = \#\{(i,j)\mid 0\le i< j\le n, V_i\cong V_j^\vee\}  
\end{equation*}
which is clearly equivalent to  \eqref{eq:dimH1Pi1}.
So we are done. Now suppose $g\ge 2$.
The differential $d_2:E_2^{01}\to E_2^{20}$  is a sum of Gysin maps in
the sense explained above.
Arguing exactly as we did for the $n=2$ case,  we can see that
either $V_i\ncong V_j^\vee$, in which case $H^0(\Delta_{ij},
V_i\otimes V_j)=0$, or else $V_i\cong V_j^\vee $ and
the portion of the Gysin map given by
$$\C=H^0(\Delta_{ij}, V_i\otimes V_i^\vee)\to H^1(X, V_i)\otimes H^1(X, V_i^\vee)$$
is injective. Therefore
$$H^1(X^n, V)\cong H^1(C_n(X), V)$$
\medskip

It remains to prove part C. Factor $\pi$ through $J_m$, so that $\pi^*$ factors through $H^{2m}(C_m(X))$.
When $m\ge 2$, $C_m(X)$ is a noncompact oriented manifold of real
dimension $2m$,  so this group is zero by Poincar\'e duality.

\end{proof}


For the remainder of this section, we explain some group theoretic
consequences of the last theorem. Some of them will be needed later.

\begin{cor}
  When $g\ge 1$, $H_1(B_n(g))\cong H_1(\Pi_1(g))$. When $X$ is a Riemann
  surface of genus $g$, the Abel-Jacobi map (with respect to any base
  point) induces an isomorphism
$$H_1(C_n(X))\cong H_1(S^nX)\cong H_1(J(X))$$
\end{cor}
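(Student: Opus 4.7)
The plan is to deduce the first assertion from Theorem~\ref{thm:abPn}(A) by the Hochschild--Serre five-term sequence of $1\to P_n(g)\to B_n(g)\to S_n\to 1$, namely
$$H_2(S_n)\to H_1(P_n(g))_{S_n}\to H_1(B_n(g))\to H_1(S_n)\to 0.$$
By Theorem~\ref{thm:abPn}(A), $H_1(P_n(g))\cong H_1(\Pi_g)^n$ with $S_n$ permuting the factors, so the coinvariants collapse to the diagonal copy of $H_1(\Pi_g)$. Since $H_2(S_n)$ is torsion while $H_1(\Pi_g)\cong\Z^{2g}$ is torsion free, the leftmost arrow vanishes, and one obtains the inclusion $H_1(\Pi_g)\hookrightarrow H_1(B_n(g))$ underlying the first asserted isomorphism.

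For the geometric realization in the second assertion, I fix a base point $x_0\in X$, form the Abel--Jacobi map $\alpha\colon X\to J(X)$, and extend it to $\alpha^n\colon S^nX\to J(X)$ by summation $(x_1,\ldots,x_n)\mapsto\sum_i\alpha(x_i)$, which restricts to the configuration space. For the isomorphism $H_1(S^nX)\cong H_1(J(X))$, the embedding $\iota\colon X\hookrightarrow S^nX$, $x\mapsto x+(n-1)x_0$, satisfies $\alpha^n\circ\iota=\alpha$; since $\alpha_*$ is the classical identification $H_1(X)\cong H_1(J(X))$ and Macdonald's theorem gives $H_1(S^nX)\cong H_1(X)$ for $n\ge 1$, this forces $\alpha^n_*$ to be an isomorphism. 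For the comparison between the configuration space and $S^nX$, I use the long exact sequence of the pair $(S^nX,SC_n(X))$: since $\Delta_n\subset S^nX$ is a complex hypersurface, the Thom isomorphism yields $H_k(S^nX,SC_n(X))\cong H_{k-2}(\Delta_n)$, which vanishes for $k=1$ and gives surjectivity on $H_1$.

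The main obstacle is the injectivity of $H_1(SC_n(X))\to H_1(S^nX)$: the kernel is the image of $H_0(\Delta_n)=\Z$, generated by a meridian (half-twist) around the big diagonal. I would close the argument by invoking the surface relation in $B_n(g)$, which writes a suitable power of a half-twist as a product of commutators of generators of $\Pi_g$; this forces the meridian class to be torsion and hence collapses the kernel, matching the $H_1(S_n)=\Z/2$ contribution appearing in the first assertion.
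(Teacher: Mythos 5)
Your route is essentially the paper's own: the paper proves the first claim by ``taking the $S_n$-coinvariant part'' of $H_1(C_n(X_g))\cong H_1(X_g^n)$ and dismisses the second as standard, and your five-term sequence together with the identification $\bigl(H_1(\Pi_g)^n\bigr)_{S_n}\cong H_1(\Pi_g)$ is just the careful version of that; your treatment of $H_1(S^nX)\cong H_1(J(X))$ via the section $x\mapsto x+(n-1)x_0$ is also fine. Two small points: for the pair $(S^nX,SC_n(X))$ you should invoke Alexander--Lefschetz duality in the compact manifold $S^nX$ rather than a Thom isomorphism, since the discriminant is singular for $n\ge 3$ (the conclusion $H_1(S^nX,SC_n(X))=0$ and $H_2(S^nX,SC_n(X))\cong\Z$ is nevertheless correct, the discriminant being irreducible of real codimension $2$); and you have silently, correctly, read $C_n(X)$ in the statement as $SC_n(X)$ --- with the ordered configuration space the displayed isomorphism fails for rank reasons.

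The genuine gap is your last paragraph: torsion is not zero. The composite $B_n(g)\to S_n\to \Z/2$ sends the half-twist $\sigma$ to the nontrivial element, so $[\sigma]\ne 0$ in $H_1(B_n(g))$. On the other hand $\sigma^2$ lies in $P_n(g)$ and is a loop around a diagonal, hence vanishes in $H_1(P_n(g))$ by Theorem~\ref{thm:abPn}(A); therefore $2[\sigma]=0$, and $[\sigma]$ splits the extension $0\to H_1(\Pi_g)\to H_1(B_n(g))\to \Z/2\to 0$ that your five-term sequence produces. The conclusion is $H_1(B_n(g))\cong H_1(\Pi_g)\oplus\Z/2$, and correspondingly $H_1(SC_n(X))\to H_1(S^nX)$ is surjective with kernel exactly the $\Z/2$ generated by the meridian class: the kernel does not collapse. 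So neither asserted isomorphism holds integrally; both hold after tensoring with $\Q$, or modulo $2$-torsion. This is not a defect you introduced --- the paper's one-line coinvariants argument ignores the same $\Z/2$, since $H_1$ of a finite quotient is not simply the coinvariants of $H_1$ of the cover --- but your write-up should either work with $\Q$-coefficients throughout or record the extra $\Z/2$ explicitly, rather than asserting that the surface relation makes the meridian class vanish.
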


\begin{proof}
This follows by taking the $S_n$-coinvariant part of the isomorphism
$$H_1(C_n(X_g))\cong H_1(X_g^n)$$  
The second is immediate using standard properties of Jacobians and
Abel-Jacobi maps \cite{gh}.
\end{proof}

The Burau and Gassner representations \cite{birman} of the pure Artin braid group can be viewed
as a family of complex representations, depending on parameters $t,t_1,\dots$, which specialize to the
 trivial representation. Equivalently, these representations are
 deformations of the trivial representation.
One might hope likewise to construct interesting representations of the pure
braid group in higher genus by deforming an uninteresting one.
``Interesting" should be taken to mean that it is not merely pulled
back from $\Pi_g^n$ under $J_n$. The
next two results will imply that interesting
semisimple representations cannot be produced this
way (see remark \ref{rmk:def}).

\begin{cor}\label{cor:def}
Suppose that  $g>1$. If $V$ is an irreducible $\C[P_g(n)]$-module pulled back
from $\Pi_g^n$, then any small deformation of $V$ is again pulled back from $\Pi_g^n$.
\end{cor}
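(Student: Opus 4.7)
The plan is to deduce this from part B of Theorem~\ref{thm:abPn} via the standard correspondence between infinitesimal deformations of a representation and first cohomology with coefficients in the endomorphism algebra.

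Write $V = J_n^*W$ for a representation $W$ of $\Pi_g^n$. Since $J_n$ is surjective (Birman), a subspace of $V$ is $P_n(g)$-invariant if and only if it is $\Pi_g^n$-invariant, so $W$ is irreducible. Consequently $\mathrm{End}(V)\cong W\otimes W^\vee$ is a semisimple $\C[\Pi_g^n]$-module, and part B of Theorem~\ref{thm:abPn} applied summand by summand yields an isomorphism
$$J_n^*: H^1(\Pi_g^n, \mathrm{End}(V)) \stackrel{\sim}{\to} H^1(P_n(g), \mathrm{End}(V)).$$

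Next, consider the pullback morphism of character varieties
$$\Phi : \mathrm{Char}(\Pi_g^n,GL(V)) \to \mathrm{Char}(P_n(g),GL(V)).$$
Because $J_n$ is surjective and $GL(V)$ is reductive, $\Phi$ is a closed immersion; its image is the locus of characters whose underlying representation vanishes on $\ker J_n$. The Zariski tangent spaces at $[W]$ and $[V]$ are the two $H^1$'s displayed above, and $d\Phi$ is exactly the restriction map, hence an isomorphism.

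A closed immersion of analytic (or formal) germs inducing an isomorphism on Zariski tangent spaces is an isomorphism of germs, by Nakayama at the level of completed local rings (together with Artin approximation if one wants a genuine analytic isomorphism). Hence $\Phi$ identifies an analytic neighborhood of $[W]$ with one of $[V]$, and every sufficiently small deformation of $V$ is the pullback of a deformation of $W$. The step that requires the most care is checking that $\Phi$ is a closed immersion of character varieties (rather than merely of representation varieties); this follows from the surjectivity of the invariant-ring map obtained from the closed immersion of representation varieties, using reductivity of $GL(V)$. I do not anticipate any serious obstruction beyond these routine verifications.
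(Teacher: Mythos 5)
There is a genuine gap at the final step. Your claim that ``a closed immersion of analytic (or formal) germs inducing an isomorphism on Zariski tangent spaces is an isomorphism of germs, by Nakayama'' is false. A surjection of local rings $A\onto B$ with kernel $I$ induces an isomorphism on cotangent spaces exactly when $I\subset \mathfrak{m}_A^2$, and this does not force $I=0$: the closed immersion $\mathrm{Spec}\,\C[x]/(x^2)\hookrightarrow \mathrm{Spec}\,\C[x]/(x^3)$ induces an isomorphism on Zariski tangent spaces (both are one-dimensional) but is not an isomorphism. Nakayama would need $I=\mathfrak{m}_A I$, which $I\subset\mathfrak{m}_A^2$ does not give. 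So even granting that $\Phi$ is a closed immersion and that $d\Phi$ is an isomorphism at $[W]$, you cannot yet conclude that the germ of $Char(P_n(g),GL(V))$ at $[V]$ coincides with the image of the germ of $Char(\Pi_g^n,GL(V))$; the former could carry extra (even nonreduced) structure invisible to the tangent space.

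What is missing is precisely the content of most of the paper's proof: smoothness of the character varieties at the points in question. If the source germ is known to be smooth, then the tangent-space isomorphism forces the ambient local ring to be regular of the same dimension, and a surjection between regular local rings of equal dimension is an isomorphism; this is the correct version of your concluding step. The paper establishes smoothness of $Char(\Pi_g^n,SL_r)$ at $V$ by an explicit dimension count: decomposing $V=V_1\boxtimes V_2\boxtimes\cdots$, using the external tensor product morphism to bound $\dim Char(\Pi_g^n,SL_r)$ from below by $\sum_i \dim Char(\Pi_g,SL_{r_i})$, computing the latter via the surjectivity with finite fibres of the commutator map $SL_{r_i}(\C)^{2g}\to SL_{r_i}(\C)$, and matching the total against $\dim H^1(\Pi_g^n, sl(V))$. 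Smoothness of $Char(P_n(g),SL_r)$ at $V$ is then squeezed out of the inequalities $\dim Char(\Pi_g^n,SL_r)\le \dim Char(P_n(g),SL_r)\le \dim H^1(P_n(g),sl(V))=\dim H^1(\Pi_g^n,sl(V))$. Your first two steps (the $H^1$ isomorphism from part B applied to $\mathrm{End}(V)$, and the injectivity of the pullback on character varieties) agree with the paper's, but without the smoothness input the implicit function theorem (or your germ argument) has nothing to bite on.
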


Before giving the proof, we need formulate this a bit more precisely.
We recall a few facts about representation varieties \cite{lm}.
When  $G$ is a finitely generated group,  the functor from commutative
$\C$-algebras to sets
$$R\mapsto Hom_{groups}(G, GL_r(R))$$
is representable by an affine $\C$-scheme of finite type, whose $\C$-points are $Hom(G,
GL_r(\C))$ (and we use the same notation for the scheme).
The $\C$-points of the GIT quotient
$$Char(G,GL_r):=Hom(G,GL_r(\C))//GL_r(\C)$$
are  equivalence classes of representations,
where two representations are equivalent if they have isomorphic semisimplifications.
Consequently, the points can also be viewed
as isomorphism classes of $r$-dimensional
semisimple representations of $G$. 
There is a possibly empty open set $Char^s(G,GL_r)\subset Char(G,GL_r)$
parametrizing irreducible representations. 
Given a $\C$-point $V\in Char^s(G,GL_r)$,
$H^1(G, End(V))$ is the Zariski tangent space to the scheme at
$V$ \cite[2.4, 2.13]{lm}. The schemes $Hom(G, GL_r(\C))$ and $Char(G, GL_r)$ often go by the
representation and character ``variety''.
The dimension formulas become a bit simpler if we replace $GL_r(\C)$ by $SL_r(\C)$, and define
$$Char(G,SL_r)=Hom(G,SL_r(\C))//SL_r(\C)\subset Char(S, GL_r)$$
In this case, the tangent space at an irreducible representation  is $H^1(G, sl(V))$, where
$sl(V)$ is kernel of the trace $End(V)\to \C$. Here is the
precise form of the previous corollary.

\begin{cor}
Let $g>1$.
If $V\in Char^s(\Pi_g^n, SL_r)$, the map $J_n^*$ induces an
isomorphism between a suitable
 analytic neighbourhood of $V\in  Char^s(\Pi_g^n, SL_r )$ and its pullback
 in $Char^s(P_n(g), SL_r)$. The analogous statement holds for the 
 character variety $Char^s(-,GL_r)$.
\end{cor}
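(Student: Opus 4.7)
My plan is threefold: reduce to a tangent-space assertion using that $J_n^*$ is a closed embedding, establish the tangent isomorphism via Part B of Theorem \ref{thm:abPn}, and bootstrap to a local analytic isomorphism via smoothness of the source.

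Since Birman's theorem makes $J_n\colon P_n(g)\twoheadrightarrow \Pi_g^n$ surjective, $J_n^*$ is a closed embedding of affine schemes, with image the locus of representations killing $\ker J_n$; it therefore suffices to show this embedding is a local analytic isomorphism near $V$. The Zariski tangent space to $Char^s(G,SL_r)$ at an irreducible $\rho$ is $H^1(G, sl(\rho))$, so the differential of $J_n^*$ at $V$ is the pullback $H^1(\Pi_g^n, sl(V)) \to H^1(P_n(g), sl(V))$. To show it is an isomorphism, I would decompose $V = V_1\boxtimes \cdots \boxtimes V_n$ into irreducible $\Pi_g$-factors; semisimplicity of each $V_i$ forces the Zariski closure of its image in $GL(V_i)$ to be reductive, which yields a $\Pi_g$-equivariant splitting $End(V_i) = \C\cdot I \oplus sl(V_i)$ with $sl(V_i)$ completely reducible. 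Tensoring these splittings, $sl(V)$ decomposes into irreducible $\C[\Pi_g^n]$-modules of the external tensor form $W_1 \boxtimes \cdots \boxtimes W_n$, and Part B applied to each summand yields the desired isomorphism.

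I would then establish smoothness of $Char^s(\Pi_g^n, SL_r)$ at $V$. Goldman's theorem gives smoothness of $Char^s(\Pi_g, SL_{r_i})$ at $V_i$ of dimension $(2g-2)(r_i^2-1)$, using $g\ge 2$ and Poincar\'e duality to force $H^2(\Pi_g, sl(V_i))=0$. The tensor-product map $\prod_i Char^s(\Pi_g, SL_{r_i}) \to Char^s(\Pi_g^n, SL_r)$ is a finite surjection onto a neighbourhood of $V$, and a K\"unneth computation matches $\dim H^1(\Pi_g^n, sl(V))$ with $\sum_i (2g-2)(r_i^2-1)$, so the target is smooth at $V$ of that dimension $d$. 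Combining with the tangent-space step: $J_n^*$ is a closed embedding with smooth source of dimension $d$ and an isomorphism on tangent spaces. The local dimension of $Char^s(P_n(g), SL_r)$ at $J_n^*(V)$ is then pinched between $d$ (as it contains the closed subvariety of that dimension) and $d$ (from the Zariski tangent space), so this target is also smooth of dimension $d$. A closed embedding between smooth equidimensional complex analytic spaces is automatically a local biholomorphism, giving the claim. The $GL_r$ variant is identical, with $End(V)$ replacing $sl(V)$ throughout.

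The chief obstacle I anticipate is the smoothness of $Char^s(\Pi_g^n, SL_r)$ at $V$: Goldman's theorem handles a single surface factor, but the tensor-product description needs care when the $V_i$ are not pairwise nonisomorphic, as the finite symmetry permuting equal factors could a priori introduce quotient singularities. An alternative is to invoke Goldman-Millson formality for the K\"ahler group $\Pi_g^n$, presenting the local ring as a quadratic cone cut out by the cup-product obstruction $H^1 \otimes H^1 \to H^2$, and to check that this map vanishes: off-diagonal terms die because $sl(V_i)$ and $sl(V_j)$ commute in $sl(V)$ for $i\ne j$, while on-diagonal terms land in $H^2(\Pi_g, sl(V_i))=0$.
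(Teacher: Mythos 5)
Your proposal is correct and follows essentially the same route as the paper: an isomorphism on tangent spaces from Part B applied to the irreducible constituents of $sl(V)$, smoothness of $Char^s(\Pi_g^n,SL_r)$ at $V$ by pinching the local dimension between the dimension of the external-tensor-product locus and the tangent-space dimension computed by K\"unneth, and then the same pinching argument for $Char^s(P_n(g),SL_r)$ using injectivity of $J_n^*$. The only real divergence is that you quote Goldman's theorem for $\dim Char^s(\Pi_g,SL_{r_i})=(2g-2)(r_i^2-1)$ where the paper computes this directly from the fibres of the commutator map $SL_{r_i}(\C)^{2g}\to SL_{r_i}(\C)$; your worry about quotient singularities is moot, since the argument only needs the dimension of the image of the tensor-product map, not that the map be an isomorphism.
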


\begin{proof}
Part B of the theorem obviously extends to semisimple representations, and this
implies that $J_n$ induces an isomorphism of
tangent spaces of $Char(\Pi_g^n,SL_r )$ and $Char(P_n(g), SL_r)$ at $V$. If we
can prove that that both schemes are smooth at $V$, then we are done
with the first part
by the implicit function theorem. 

  We first  check that  $S=Char(\Pi_g^n,SL_r)$ is smooth at $V$. When $n=1$,
  this is well known. We have that 
  \begin{equation}
    \label{eq:dimS}
\dim S\le \dim H^1(\Pi_g^n, sl(V))    
  \end{equation}
  where $\dim S$ etc. should be understood as the dimension at $V$, i.e. $\dim \OO_{S,V}$.
For smoothness, it is enough to check the opposite inequality. 
 We can decompose
 $V= V_1\boxtimes V_2\ldots$, where $V_i$ is irreducible of rank
 $r_i$. Therefore the image of the morphism
$$Z=Char(\Pi_g, SL_{r_1})\times Char(\Pi_g, SL_{r_2})\times\ldots\to 
S$$
defined by external tensor product contains $V$. This morphism is injective in
a neighbourhood of $V$. Therefore $\dim S \ge \dim Z=\sum \dim
Char(\Pi_g, SL_{r_i})$. To compute these dimensions, we use the morphism
$$SL_{r_i}(\C)^{2g}\to SL_{r_i}(\C)$$
Given by sending $(A_i)\mapsto [A_1,A_2]\ldots [A_{2g-1},
A_{2g}]$. This can be seen to be surjective with finite fibres.
Therefore 
$$\dim Hom(G, SL_{r_i}(R)) = (r_i^2-1)(2g-1)$$
  and one deduces that
$$\dim Char(\Pi_g, SL_{r_i})= 2(r_i^2-1)(g-1)$$
On the other hand, by K\"unneth and a calculation similar to \eqref{eq:chiXg}
$$\dim  H^1(\Pi_g^n, sl(V))= \sum_i \dim H^1(\Pi_g, sl(V_i)) = \sum_i
2(r_i^2-1) (g-1)$$
This implies that equality holds in \eqref{eq:dimS} as we claimed. 

The pullback map $J_n^*:Char(\Pi_g^n, SL_r)\to Char(P_n(g),SL_r)$ can be
seen to be injective.  Therefore
\begin{equation*}
  \begin{split}
\dim Char(\Pi_g^n, SL_r)&\le \dim  Char(P_n(g),SL_r)\\
 &\le  \dim H^1(P_n(g), sl(V))   \\
&=\dim H^1(\Pi_g^n , sl(V))  
  \end{split}
\end{equation*}
So we must have  equality. This implies that $Char(P_n(g),SL_r)$ is also
smooth $V$.
 
We omit the details for $Char(-, GL_r)$. The proof is almost the same. The only change is 
that the dimensions of $Char(\Pi_g^n,GL_r)$, $Char(P_n(g),GL_r)$,
and their tangent spaces at $V$  are
$$2gn+\sum_i 2(r_i^2-1) (g-1)$$
\end{proof}

\begin{rmk}\label{rmk:def}
The last  result does not preclude the possibility of deforming a semisimple
representation of $\Pi_g^n$ to a nonsemisimple representation of
$P_n(g)$, which is not a $\Pi_g^n$-module.  However, it will imply
that the semisimplification of the new
representation would still be a $\Pi_g^n$-module.
\end{rmk}

For any finitely generated group $G$, let 
$$Char(G) = Hom(G, \C^*)= Hom(H_1(G), \C^*)$$
denote the group of characters of $G$. This is an algebraic torus, i.e. product of $\C^*$'s,
times a finite abelian group.
Given $\rho\in Char(G)$, let
$\C_\rho$ denote the corresponding $\C[G]$-module. 
The first cohomology jump locus
is defined by
$$\Sigma^1(G) = \{\rho\in Char(G)\mid H^1(G, \C_\rho)\not=0\}$$
This invariant is popular among people working with K\"ahler and
related groups. We have the following well known properties

\begin{lemma}
  \-
  \begin{enumerate}
  \item The set $\Sigma^1(G)$ is closed in the Zariski topology.
  \item When $G=\Pi_g$, $g\ge 2$ and $\rho\not= 1$, $\dim
    H^1(G,\C_\rho) =2g-2$. Therefore $\Sigma^1(G)=Char(G)$.
\item If $G= G_1\times G_2\ldots$,  with projections $\pi_i$, $\Sigma^1(G) = \pi_1^*\Sigma^1(G_1)\cup
  \pi_2^* \Sigma^1(G_2)\ldots$
\item If $G \cong \Z^n$, $\Sigma^1(G)=\{1\}$.
\item A surjective homomorphism $f:G_1\to G_2$ gives an inclusion
  $f^*\Sigma^1(G_2)\subset \Sigma^1(G_1)$.
  \end{enumerate}
\end{lemma}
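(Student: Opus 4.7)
The plan is to dispatch the five items one by one using standard tools from group cohomology; the author signals by saying ``well known'' that none is deep. For (1), I would fix a finite presentation of $G$ and apply Fox calculus to obtain a partial free resolution of $\Z$ over $\Z[G]$ whose differentials, after tensoring with $\C_\rho$, become matrices whose entries are regular functions on the affine variety $Char(G)$. The dimension of $H^1$ of such a complex is upper semicontinuous in $\rho$, so $\Sigma^1(G) = \{\rho : \dim H^1(G, \C_\rho) \ge 1\}$ is the vanishing locus of certain minors and hence Zariski closed.

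For (2), take $\rho \ne 1$ in $Char(\Pi_g)$. Then $H^0(\Pi_g, \C_\rho) = \C_\rho^{\Pi_g} = 0$, and by Poincar\'e duality on the closed oriented surface $X_g$ one has $H^2(\Pi_g, \C_\rho) \cong H^0(\Pi_g, \C_{\rho^{-1}})^\vee = 0$. Since $\chi(\Pi_g, \C_\rho) = \chi(X_g) = 2-2g$ independently of $\rho$, the alternating sum forces $\dim H^1(\Pi_g, \C_\rho) = 2g-2 > 0$, so every nontrivial $\rho$ lies in $\Sigma^1(\Pi_g)$; since $\Sigma^1$ is closed and contains $1$ (because $H^1(\Pi_g,\C) \ne 0$), it equals all of $Char(\Pi_g)$.

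For (3), the K\"unneth formula applied to $\rho = \rho_1 \boxtimes \rho_2$ gives
$$H^1(G_1 \times G_2, \C_\rho) = \bigl(H^1(G_1, \C_{\rho_1}) \otimes H^0(G_2, \C_{\rho_2})\bigr) \oplus \bigl(H^0(G_1, \C_{\rho_1}) \otimes H^1(G_2, \C_{\rho_2})\bigr),$$
and since $H^0(G_k, \C_{\rho_k}) \ne 0$ iff $\rho_k$ is trivial, nonvanishing on the left is equivalent to some $\rho_k$ being trivial with the other lying in $\Sigma^1(G_k)$, which is precisely $\pi_1^*\Sigma^1(G_1) \cup \pi_2^*\Sigma^1(G_2)$; iterating handles arbitrary finite products. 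Then (4) follows by induction from (3) together with the base case $\Sigma^1(\Z) = \{1\}$, which is immediate since $H^1(\Z, \C_\rho) = \C_\rho / (\rho(1)-1)\C_\rho$ vanishes unless $\rho = 1$. For (5), the five-term exact sequence of Lyndon--Hochschild--Serre for $1 \to \ker f \to G_1 \to G_2 \to 1$ with coefficients in $\C_{f^*\rho}$ yields an injection $H^1(G_2, \C_\rho) \hookrightarrow H^1(G_1, \C_{f^*\rho})$ via inflation, so $\rho \in \Sigma^1(G_2)$ forces $f^*\rho \in \Sigma^1(G_1)$. The only mildly delicate step is (1), where one must verify that the Fox derivative matrix has entries that are regular on $Char(G)$, but this is routine bookkeeping.
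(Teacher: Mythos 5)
Your arguments for (2)--(5) coincide with the paper's: item (2) is the Euler characteristic computation \eqref{eq:chiXg} combined with the vanishing of $H^0$ and $H^2$ for nontrivial $\rho$, item (3) is K\"unneth, item (4) reduces to the rank-one computation for $\Z$ (the paper phrases it via Poincar\'e duality $H^1(\Z,\C_\rho)\cong H^0(\Z,\C_{\rho^{-1}})^*$, you via coinvariants --- the same thing), and item (5) is the inflation injection from Hochschild--Serre. The only divergence is item (1), which the paper disposes of by citing \cite[I cor 2.5]{arapura2}; your Fox-calculus/semicontinuity argument is the standard proof behind such a statement and is correct, with one small caveat: the lemma is stated in the context of finitely generated (not necessarily finitely presented) groups $G$, so you should either note that every group to which it is applied here is finitely presented, or observe that with infinitely many relations the rank of the relation matrix is still lower semicontinuous in $\rho$ (rank $\ge k$ is detected by a nonvanishing $k\times k$ minor), so the upper semicontinuity of $\dim H^1(G,\C_\rho)$ survives without a finite presentation.
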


\begin{proof}
 For (1), see \cite[I cor 2.5]{arapura2}.
  The second item follows  from \eqref{eq:chiXg}, the third from the
  K\"unneth formula, the fourth from (3) and  Poincar\'e duality
  $H^1(\Z,\C_\rho)\cong H^0(\Z,\C_{\rho^{-1}})^*$,
 and the fifth from the fact that
  $f^*:H^1(G_2,\C_\rho)\to H^1(G_1,\C_{f^*\rho})$ is injective e.g. by
  Hochschild-Serre.
\end{proof}

For an arbitrary  group, $\Sigma^1(G)$ can be fairly wild.  When $G$ is the fundamental group of a quasiprojective
manifold, this set is always a finite union of translates of subtori \cite{arapura2}.  For braid groups, we can give 
much more precise information. For ease of
reading, we split cases involving $g$ into separate corollaries.

\begin{cor}\label{cor:SigmaPg}
  Suppose $g\ge 2$.
  \begin{enumerate}
  \item  We can identify
$$Char(P_n(g)) = \prod_1^n \pi_i^*Char(\Pi_g)$$
\item The cohomology jump locus $P_n(g)$ is a finite union of  $2g$ dimensional
  subtori, specfically
$$\Sigma^1(P_n(g)) =  \bigcup \pi_i^*Char(\Pi_g)$$
\item There is no surjective homomorphism $P_n(g)\to \Pi_h$
  unless $h\le g$.  
\end{enumerate}
\end{cor}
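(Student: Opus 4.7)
The plan is to deduce all three statements directly from Theorem~\ref{thm:abPn} together with the general properties of $\Sigma^1$ collected in the previous lemma; most of the work has already been done, and this corollary is where the payoff is extracted.

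For (1), I would apply $Hom(-,\C^*)$ to the isomorphism $H_1(P_n(g)) \cong H_1(\Pi_g)^n$ of part A of Theorem~\ref{thm:abPn}. Since characters factor through abelianization, this yields
\[
Char(P_n(g)) \cong \prod_{i=1}^n Hom(H_1(\Pi_g),\C^*) = \prod_{i=1}^n \pi_i^* Char(\Pi_g).
\]

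For (2), a character $\rho \in Char(P_n(g))$ corresponds via (1) to a character of $\Pi_g^n$, and the associated one-dimensional $\C[\Pi_g^n]$-module $\C_\rho$ is automatically irreducible. Part B of Theorem~\ref{thm:abPn} then gives $H^1(P_n(g),\C_\rho) \cong H^1(\Pi_g^n,\C_\rho)$, so the two jump loci coincide under the identification of (1). Items (2) and (3) of the lemma compute
\[
\Sigma^1(\Pi_g^n) = \bigcup_i \pi_i^* \Sigma^1(\Pi_g) = \bigcup_i \pi_i^* Char(\Pi_g),
\]
each component being a subtorus of dimension exactly $2g$.

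For (3), the cases $h \le 1$ are vacuous since $g \ge 2$, so assume $h \ge 2$ and suppose for contradiction that $f: P_n(g) \onto \Pi_h$ is surjective. Since $f$ is surjective, $f^*: Char(\Pi_h) \to Char(P_n(g))$ is injective and its image is an irreducible subtorus of dimension $2h$. By item (5) of the lemma and item (2) applied to $\Pi_h$, this subtorus lies in $\Sigma^1(P_n(g))$. But by part (2), $\Sigma^1(P_n(g))$ is a finite union of irreducible subtori of dimension $2g$; decomposing $f^*Char(\Pi_h)$ as the union of its intersections with these components and using its irreducibility, it must be contained in a single component, which forces $2h \le 2g$, i.e.\ $h \le g$.

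The deduction is essentially mechanical once Theorem~\ref{thm:abPn} is in hand; the only point requiring any care---and what I would regard as the main (minor) obstacle---is the containment argument in (3), where one must combine the elementary algebro-geometric fact that an irreducible subvariety of a finite union of closed irreducibles lies in one of them with the precise dimension count coming from part (2). Parts (1) and (2) of the corollary are otherwise direct translations of parts A and B of the theorem.
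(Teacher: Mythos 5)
Your proposal is correct and follows essentially the same route as the paper: part (1) from Theorem~\ref{thm:abPn}(A) by applying $Hom(-,\C^*)$, part (2) from Theorem~\ref{thm:abPn}(B) together with the stated properties of $\Sigma^1$, and part (3) by noting that a surjection onto $\Pi_h$ would place a $2h$-dimensional subtorus inside the finite union of $2g$-dimensional subtori from (2). The paper's argument is just a terser version of yours; your extra care with the irreducibility/containment step in (3) is a reasonable elaboration of what the paper leaves implicit.
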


\begin{proof}
  The first item follows from part A of the theorem, and the second
  from part B and the previous lemma. If we had a surjection
  $f:P_n(g)\to \pi_1(\Pi_h)$, then $f^*Char(\Pi_h)\subset \Sigma^1(P_n(g))$ is
  an $2h$ dimensional subtorus. If $h>g$, then this  contradicts
   (2).
\end{proof}

\begin{cor}\label{cor:SigmaP1}
\-
   \begin{enumerate}
  \item  We can identify
$$Char(P_n(1)) = \prod_1^n \pi_i^*Char(\Pi_1)$$
\item  When $n\ge 2$, $\Sigma^1(P_n(1))$ is a finite union of $2n-2$ dimensional subtori,
$$T_{ij} = \{(\rho_1,\rho_2,\ldots)\mid \rho_i\rho_j=1\}$$
\item For a general point $\rho\in T_{ij}$,
$$\dim H^1(P_n(1),\C_\rho)=1$$

\item When $n\ge 3$, there is no surjective homomorphism $P_n(g)\to
  \Pi_h$, unless $h< n-1$.
\end{enumerate}
\end{cor}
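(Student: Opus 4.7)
The plan is to deduce each part from results already established. Part (1) is immediate from part~A of Theorem~\ref{thm:abPn} at $g=1$: the isomorphism $H_1(P_n(1))\cong H_1(\Pi_1)^n$ induced by $J_n$ dualizes under $Hom(-,\C^*)$ to the identification $Char(P_n(1))=\prod_i \pi_i^*Char(\Pi_1)$.

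Parts (2) and (3) read off directly from \eqref{eq:dimH1Pi1}. Since $\dim H^1(P_n(1),\C_\rho)$ counts the pairs $(i,j)$ with $\rho_i\rho_j=1$, we have $\rho\in\Sigma^1(P_n(1))$ iff some $\rho_i\rho_j=1$, giving $\Sigma^1(P_n(1))=\bigcup_{i<j}T_{ij}$. The condition $\rho_i\rho_j=1$ in the two factors $Char(\Pi_1)\times Char(\Pi_1)=(\C^*)^2\times(\C^*)^2$ is a pair of equations, so each $T_{ij}$ has codimension $2$ in the $2n$-dimensional ambient torus, hence dimension $2n-2$. For (3), note that $T_{ij}\neq T_{kl}$ whenever $\{i,j\}\neq\{k,l\}$, so $T_{ij}\cap T_{kl}$ is a proper subtorus of $T_{ij}$; a generic $\rho\in T_{ij}$ thus satisfies only the one equation $\rho_i\rho_j=1$, and the count gives $1$.

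The substantive work is part~(4). Suppose $f:P_n(1)\to\Pi_h$ is surjective; if $h\le 1$ then $h<n-1$ is automatic for $n\ge 3$, so assume $h\ge 2$. Then $\Sigma^1(\Pi_h)=Char(\Pi_h)$ by the previous lemma, and item (5) of that lemma embeds $f^*Char(\Pi_h)$ as a $2h$-dimensional irreducible subtorus inside $\Sigma^1(P_n(1))=\bigcup T_{ij}$. Irreducibility forces the image into a single $T_{ij}$, so $2h\le 2n-2$, i.e.\ $h\le n-1$. To exclude the borderline case $h=n-1$, I would combine part~(3) with the Hochschild--Serre injectivity $H^1(\Pi_h,\C_{\rho'})\hookrightarrow H^1(P_n(1),\C_{f^*\rho'})$ already cited in the lemma: if $h=n-1$, the two tori $f^*Char(\Pi_h)$ and $T_{ij}$ are irreducible closed connected subgroups of $Char(P_n(1))$ of equal dimension with one inside the other, hence coincide. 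Then $f^*\rho'$ is generic in $T_{ij}$ for generic $\rho'$, and we would need $2n-4=\dim H^1(\Pi_h,\C_{\rho'})\le \dim H^1(P_n(1),\C_{f^*\rho'})=1$ by (3), which is absurd once $n\ge 3$.

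The main obstacle is precisely this last improvement from the naive bound $h\le n-1$ (obtained from the jump locus dimension alone) to the sharper $h<n-1$; the dimension count on $\Sigma^1$ leaves the equality $h=n-1$ open, and part~(3) together with cohomological functoriality is what rules it out.
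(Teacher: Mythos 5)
Your proposal is correct and follows essentially the same route as the paper: part (1) from Theorem~\ref{thm:abPn}(A), parts (2)--(3) by reading off \eqref{eq:dimH1Pi1}, and part (4) by a dimension count on $\Sigma^1$ to get $h\le n-1$, followed by identifying $f^*Char(\Pi_h)$ with some $T_{ij}$ in the borderline case $h=n-1$ and contradicting the generic value $\dim H^1(P_n(1),\C_\rho)=1$ via the injectivity of $f^*$ on $H^1$. The minor differences (explicitly invoking irreducibility of the pulled-back torus, and separately disposing of $h\le 1$) are just added detail, not a different argument.
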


\begin{proof}
  The first item follows from part A of the theorem, (2) and (3) from
  \eqref{eq:dimH1Pi1}. Given a surjection $f:P_n(g)\to \Pi_h$,  $h>n$
  would contradict (1), and $h=n$ would contradict (2). Finally,  suppose $n\ge 3$.
  If $h=n-1$, then $f^*Char(\Pi_h)$ would have to coincide with some $T_{ij}$.
This would force $\dim H^1(P_n(1),\C_\rho)\ge 2(n-1)-2>1$ for every
$\rho\in T_{ij}$, and this would  contradict (3).

\end{proof}

The kernel  $K=\ker J_n:P_n(g)\to \Pi_g^n$ can be described
as the normal subgroup generated by an
embedding of the  Artin braid group  $P_n(\R^2)\subset P_n(g)$ \cite[thm 1.7]{birman}.
At least for $g=1$, the gap between $P_n(\R^2)$ and $K$ is considerable.

\begin{cor}
  If $g=1$ and $n\ge 2$,  $\dim H_1(K,\C)=\infty$. In particular, $K$ is not finitely generated,
  and therefore it does not coincide with $P_n(\R^2)$.
\end{cor}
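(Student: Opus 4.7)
The plan is to apply the Hochschild-Serre spectral sequence to the extension $1 \to K \to P_n(1) \to \Pi_1^n \to 1$, twisted by suitable characters of $\Pi_1^n$, and use part B of Theorem~\ref{thm:abPn} (via \eqref{eq:dimH1Pi1}) to detect infinitely many distinct nonzero eigenspaces in $H^1(K,\C)$ under the natural $\Pi_1^n$-action.

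First I would check that for any nontrivial $\rho \in Char(\Pi_1^n)$ one has $H^*(\Pi_1^n,\C_\rho) = 0$: by K\"unneth this reduces to the familiar vanishing of $H^*(\Z,\C_t)$ for $t \neq 1$. With this vanishing, and noting that $\C_\rho$ restricts to the trivial $K$-module, the five-term exact sequence of Hochschild-Serre collapses to an isomorphism
$$H^1(P_n(1),\C_\rho) \;\cong\; H^1(K,\C_\rho)^{\Pi_1^n} \;=\; H^1(K,\C)_{\rho^{-1}},$$
where $H^1(K,\C)_{\chi}$ denotes the $\chi$-eigenspace of $H^1(K,\C)$ under the conjugation action of $\Pi_1^n$. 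The step that most needs care is the identification of the $\Pi_1^n$-module $H^1(K,\C_\rho)$ with the $\rho$-twist of $H^1(K,\C)$, so that $\Pi_1^n$-invariants become the $\rho^{-1}$-eigenspace; this is routine once one traces the convention, but it is easy to botch the signs/inverses.

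Next, fix any pair $1 \le i < j \le n$ and consider the subtorus $T_{ij} \subset Char(\Pi_1^n)$ from Corollary~\ref{cor:SigmaP1}, which has positive dimension $2n-2 \ge 2$. By \eqref{eq:dimH1Pi1}, $\dim H^1(P_n(1),\C_\rho) \ge 1$ for every $\rho \in T_{ij}$. Since $T_{ij}$ contains infinitely many nontrivial characters, the previous display produces infinitely many distinct characters $\rho^{-1}$ for which $H^1(K,\C)_{\rho^{-1}} \neq 0$. Eigenspaces for distinct characters of an abelian group are linearly independent, so $H^1(K,\C)$, and therefore $H_1(K,\C) = H^1(K,\C)^*$ by way of $K_{ab}\otimes\C$, is infinite dimensional.

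The remaining assertions are then immediate. A finitely generated group has a finitely generated abelianization, hence a finite-dimensional $H_1(\cdot,\C)$, so $K$ is not finitely generated; and the classical Artin pure braid group $P_n(\R^2)$ is finitely generated, so $K \neq P_n(\R^2)$.
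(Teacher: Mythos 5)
Your argument is correct and is essentially the paper's own (sketched) proof: both run the Hochschild--Serre five-term sequence for $1\to K\to P_n(1)\to \Pi_1^n\to 1$ against nontrivial characters $\rho$, use the vanishing of $H^*(\Pi_1^n,\C_\rho)$ to identify $H^1(P_n(1),\C_\rho)$ with a $\rho^{\pm1}$-isotypic piece of the Alexander module, and then invoke the infinitude of $\Sigma^1(P_n(1))$ supplied by the positive-dimensional tori $T_{ij}$ via \eqref{eq:dimH1Pi1}. The only cosmetic difference is that you conclude by linear independence of eigenspaces for distinct characters, whereas the paper concludes by noting that $Hom_{\C[\Pi_1^n]}(M,\C_\rho)$ can be nonzero for only finitely many $\rho$ when $M=H_1(K,\C)$ is finite dimensional.
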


\begin{proof}
This stems from the fact that $\Sigma^1(P_n(1)) $ is an infinite set
by the last corollary.
  Since the argument is sort of standard (see the proof of \cite[V
  1.10]{arapura2}), we just sketch it. Note that the ``Alexander
  module''  $M=H_1(K,\C)$ is a $\C[A]$-module, where $A=\Pi_1^n$,
via the extension
$$ 1\to K\to P_n(1)\to A\to 1$$
Use Hochschild-Serre 
to obtain
$$0\to H^1(A,\C_\rho)\to H^1(P_n(1), \C_\rho)\to H^0(A,
H^1(K,\C_\rho))\to  H^2(A,\C_\rho)$$
Using the fact that $A$ is abelian, we get
$$H^1(\Pi_1, \C_\rho)\cong H^0(A,
H^1(K,\C_\rho))\cong Hom_{\C[A]}(M, \C_\rho)$$
for $\rho\not=1$. If $M$ was finite dimensional,  this would be zero
for all but a finite number of $\rho$'s.

\end{proof}

We can extend some of  the above  results to  $X=\C^*$.

\begin{prop}\label{prop:Cstar}
 Suppose that $X=\C^*$. Then
 $$\rank H_1(P_n(X)) = n +\binom{n}{2}$$
 If $\rho\in Char(\pi_1(X)^n)=(\C^*)^n$, then
 $$\dim H^1(P_n(X),\C_\rho) = \#\{(i,j)\mid 0\le i< j\le n, \rho_i\rho_j=1\}  $$
\end{prop}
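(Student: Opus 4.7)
The plan is to mirror the proof of Theorem~\ref{thm:abPn}, applying the Leray spectral sequence for the inclusion $j: C_n(\C^*)\to(\C^*)^n$ with local coefficients $\C_\rho$, and exploiting the fact that the noncompactness of $\C^*$ forces the key second-page differential to vanish.

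First, since $\C^*$ is a $K(\Z,1)$ and any punctured $\C^*$ is $K(F_m,1)$, Lemma~\ref{lemma:FN} shows $C_n(\C^*)$ is aspherical, so $H^*(P_n(\C^*),\C_\rho)\cong H^*(C_n(\C^*),\C_\rho)$. I would then invoke Totaro's description of the $E_2$ page exactly as in \eqref{eq:leray}: with $V=\C_\rho$,
$$E_2^{1,0}=H^1((\C^*)^n,V),\quad E_2^{0,1}=\bigoplus_{1\le i<j\le n}H^0(\Delta_{ij},V_i\otimes V_j),\quad E_2^{2,0}=H^2((\C^*)^n,V),$$
and $d_2:G_{ij}\mapsto[\Delta_{ij}]\cdot V$.

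The main obstacle is to show $d_2=0$. When $\rho$ is nontrivial this is automatic: some $\rho_k\neq 1$, and since $\pi_1(\C^*)=\Z$ has cohomological dimension one, $H^*(\C^*,\C_{\rho_k})=0$, so K\"unneth forces $E_2^{2,0}=0$. When $\rho=1$ the hard part will be to verify directly that $[\Delta]=0$ in $H^2(\C^*\times\C^*)\cong\Z$, which by naturality kills each $[\Delta_{ij}]$. Concretely, $H_2(\C^*\times\C^*)\cong\Z$ is generated by the compact subtorus $T=\{|z_1|=|z_2|=1\}$; the homologous torus $T'=\{|z_1|=1,|z_2|=2\}$ is disjoint from the diagonal, so the intersection pairing $\langle[\Delta],[T]\rangle=\langle[\Delta],[T']\rangle$ vanishes and $[\Delta]=0$. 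This is the noncompact mirror of the positive-genus diagonal calculation in the proof of Theorem~\ref{thm:abPn}, with the opposite conclusion.

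With $d_2=0$ the spectral sequence yields
$$H^1(P_n(\C^*),\C_\rho)\cong H^1((\C^*)^n,\C_\rho)\oplus\bigoplus_{1\le i<j\le n}H^0(\Delta_{ij},V_i\otimes V_j).$$
For $\rho=1$, K\"unneth gives $E_2^{1,0}=\C^n$ and $E_2^{0,1}=\C^{\binom{n}{2}}$, proving $\rank H_1(P_n(\C^*))=n+\binom{n}{2}$. For the character formula, exactly as in the $g=1$ case of Theorem~\ref{thm:abPn}, each diagonal summand contributes $\C$ precisely when $\rho_i\rho_j=1$, while the surviving $E_2^{1,0}$ term is absorbed by the auxiliary indices $(0,j)$ under the convention $\rho_0=1$ used in~\eqref{eq:dimH1Pi1}, producing the combinatorial count stated in the proposition.
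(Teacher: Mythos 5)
Your proposal follows the same overall strategy as the paper's proof: the Leray spectral sequence for $j:C_n(\C^*)\to(\C^*)^n$ with Totaro's description of the relevant $E_2$-terms, reduced to showing that $d_2:E_2^{0,1}\to E_2^{2,0}$ vanishes, with the character formula handled exactly as in the $g=1$ case of Theorem~\ref{thm:abPn}. The one place you genuinely diverge is the step the paper itself singles out as new, namely the vanishing of the Gysin map $\gamma$ (equivalently $[\Delta]=0$ in $H^2(\C^*\times\C^*)$): the paper proves this either by noting that the Gysin map is a morphism of mixed Hodge structures $\Z(-1)\to\Z(-2)$, or by factoring through the restriction $H^2(\PP^1\times\PP^1)\to H^2(\C^*\times\C^*)$, which is zero by K\"unneth. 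Your argument --- that $[\Delta]$ pairs trivially with the generator $[T]$ of $H_2(\C^*\times\C^*)\cong\Z$ because $T$ can be displaced to $T'=\{|z_1|=1,\,|z_2|=2\}$, which misses the diagonal, and the evaluation pairing is unimodular --- is a valid third route and arguably the most elementary of the three; the Hodge-theoretic version has the advantage of applying uniformly in other quasi-projective situations, but nothing is lost here. Two points deserve tightening. First, $H^*(\C^*,\C_{\rho_k})=0$ for $\rho_k\neq 1$ does not follow from cohomological dimension one alone; you also need $H^0=H^1=0$, which holds because $\rho_k(1)-1$ acts invertibly on $\C_{\rho_k}$. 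Second, in the final bookkeeping you should record that the restriction of $\C_\rho$ to $\Delta_{ij}\cong(\C^*)^{n-1}$ is the external product of $\C_{\rho_i\rho_j}$ with the remaining characters $\C_{\rho_k}$, $k\neq i,j$, so both $H^0(\Delta_{ij},-)$ and the surviving $E_2^{1,0}$ term depend on all of $\rho$ and not only on the product $\rho_i\rho_j$; your assertion that these terms are ``absorbed by the indices $(0,j)$'' is stated rather than checked. The paper's own proof elides this in the same way, so your account matches its intended reading, but an explicit verification of the count for characters with only some trivial components would strengthen the write-up.
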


\begin{proof}
   The proof is very similar to  what was done above. However, there is
one new step that we will explain, starting with $n=2$.
Consider the Gysin sequence
$$0\to  H^1(X^2)\to H^1(X^2-\Delta)\to H^0(\Delta)\stackrel{\gamma}{\to} H^2(X^2)$$
 We claim that $\gamma$ is zero.  
There are at least two ways to see
 this. One can see from Deligne \cite{deligne}, that the Gysin map is
 a morphism of mixed Hodge structures,
$$H^0(\Delta)(-1)\to H^2(\C^*\times \C^*)$$
One can check that the Hodge structure on the left is $\Z(-1)$ and on
the right, it is $\Z(-2)$. This forces $\gamma=0$.
For a more pedestrian proof, consider the diagram
$$
\xymatrix{
 H^0(\Delta)\ar[r]^{\gamma} & H^2(\C^*\times \C^*) \\ 
 H^0(\bar \Delta)\ar[u]^{\cong}\ar[r] & H^2(\PP^1\times \PP^1)\ar[u]^{r}
}
$$
where $\bar\Delta\subset \PP^1\times \PP^1$ is the diagonal. Using the K\"unneth
formula, and the fact that $H^1(\PP^1)=H^2(\C^*)=0$, we can see that $r=0$. Therefore, it follows again that
$\gamma=0$. 

With the claim in hand, one can see that
$$H^1(P_2(X)) = H^1(X^2-\Delta)\cong \Z^3$$
For the general case, one checks that the differential
$$d_2:E_2^{01} \to E_2^{20}$$
in \eqref{eq:leray} vanishes, by arguing as in the claim. This will
show that
$$\rank H_1(P_n(X)) = \rank E_2^{10}+\rank E_2^{01}= n +\binom{n}{2}$$

The proof of last part is identical to the proof of part B of
theorem~\ref{thm:abPn}  when $g=1$.

\end{proof}

\begin{cor}\label{cor:Cstar}
Suppose that $n\ge 2$.
\begin{enumerate}
\item The set $\Sigma^1(P_n(X))\cap J_n^*Char(\pi_1(X)^n)$ is a union of tori
 $$T_{ij} = \{(\rho_1,\rho_2,\ldots)\mid \rho_i\rho_j=1\}$$
 
 \item If $\rho\in T_{ij}$ is a general point
 $$\dim H^1(P_n(X),\C_\rho)=1$$
 
 \item  There is no surjective homomorphism $f:P_n(X)\to \Pi_g$, with $g\ge 2$, such that $f^*Char(\Pi_g)$ contains $T_{ij}$ 
\end{enumerate}
\end{cor}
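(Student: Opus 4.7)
The plan is to derive parts (1) and (2) directly from the dimension formula in Proposition~\ref{prop:Cstar}, and to settle part (3) by the same jump-locus obstruction used in Corollary~\ref{cor:SigmaP1}. For part (1), I would apply the formula from Proposition~\ref{prop:Cstar} to a character $\rho = (\rho_1,\ldots,\rho_n) \in Char(\pi_1(X)^n)$: the condition $\rho \in \Sigma^1(P_n(X))$ is equivalent to the count $\#\{(i,j) \mid i<j,\, \rho_i\rho_j = 1\}$ being positive, which is in turn equivalent to $\rho \in \bigcup_{i<j} T_{ij}$. For part (2), I would choose $\rho \in T_{ij}$ sufficiently general that the only pair $(k,\ell)$ with $\rho_k\rho_\ell = 1$ is $(i,j)$ itself---a Zariski open condition on $T_{ij}$---so that the formula gives $\dim H^1(P_n(X), \C_\rho) = 1$.

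For part (3), I would argue by contradiction. Suppose $f: P_n(X) \onto \Pi_g$ is surjective with $g \ge 2$ and $f^*Char(\Pi_g) \supset T_{ij}$. By item (2) of the preceding lemma, $\dim H^1(\Pi_g, \C_\rho) = 2g-2 \ge 2$ for every nontrivial $\rho \in Char(\Pi_g)$, and the Hochschild--Serre argument behind item (5) shows that $f^*: H^1(\Pi_g,\C_\rho) \to H^1(P_n(X),\C_{f^*\rho})$ is injective. Picking a general $\sigma \in T_{ij}$, we may write $\sigma = f^*\rho$ with $\rho \ne 1$; injectivity then forces
$$\dim H^1(P_n(X),\C_\sigma) \ge 2g-2 \ge 2,$$
contradicting the value $1$ given by part (2).

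I do not anticipate any serious obstacle: the heavy lifting has already been done in Proposition~\ref{prop:Cstar}, and everything else is bookkeeping. The only point to check is that a general $\sigma \in T_{ij}$ avoids the trivial character and lifts to a nontrivial $\rho$, but this is immediate from $\dim T_{ij} > 0$ together with the injectivity of $f^*$ on character groups (a consequence of the surjectivity of $f$).
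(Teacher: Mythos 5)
Your proposal is correct and follows exactly the route the paper intends: the paper states Corollary~\ref{cor:Cstar} without an explicit proof, leaving it as an immediate consequence of the dimension formula in Proposition~\ref{prop:Cstar}, with part (3) handled by the same jump-locus/injectivity argument used for Corollary~\ref{cor:SigmaP1}. Your care in checking that a general $\sigma\in T_{ij}$ lies in only one $T_{k\ell}$, is nontrivial, and pulls back from a nontrivial character of $\Pi_g$ supplies precisely the bookkeeping the paper omits.
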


\section{Braid groups of Riemann surfaces are almost never K\"ahler}

A group $G$ is called K\"ahler if it is isomorphic to the fundamental
group of a compact K\"ahler manifold. Let us say $G$ is projective
if it is isomorphic to the fundamental
group of a projective manifold. Projective groups are K\"ahler, but
the converse is unknown.
Here is the main result of the paper.

\begin{thm}\label{thm:hypRS}
If $X$ is a Riemann surface and $n\ge 2 $, then  $P_n(X)$ is not K\"ahler,
unless $X=\PP^1(= S^2)$ and $n=2$ or $3$.
\end{thm}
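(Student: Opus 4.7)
The plan is to split into cases by the topology of $X$ and, in each case, derive a contradiction from the assumption that $P_n(X)$ is K\"ahler. When $X$ is noncompact and not equal to $\CC$ (the case $X=\CC$ being Arapura's prior result for the Artin braid group), the Fadell-Neuwirth fibration realizes $P_n(X)$ as an iterated extension of fundamental groups of open Riemann surfaces. If $\pi_1(X)$ is free of rank $\geq 2$, then
\begin{equation*}
1 \to P_{n-1}(X \setminus \{pt\}) \to P_n(X) \to \pi_1(X) \to 1
\end{equation*}
has finitely generated kernel and a quotient with infinitely many ends, so \cite{abr} applies. For $X=\CC^*$ the shearing $(x,y)\mapsto(x,y/x)$ gives $P_2(\CC^*)\cong \ZZ\times F_2$, and iterating Fadell-Neuwirth produces for every $n\geq 2$ a surjection $P_n(\CC^*) \twoheadrightarrow F_2$ with finitely generated kernel, so \cite{abr} applies again. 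For $X=\PP^1$ with $n\geq 4$, Fadell-Neuwirth exhibits $P_{n-3}(\CC\setminus\{0,1\})$ as a finite index subgroup of $P_n(\PP^1)$ (with finite quotient $P_3(\PP^1)$), and since K\"ahlerness passes to finite index subgroups via covers, this case reduces to the preceding one.

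The core case is $X=X_g$ with $g\geq 2$, handled as in the introduction. Suppose $M$ is a compact K\"ahler manifold with $\pi_1(M)=P_n(g)$. Choose indices $i<j$ and let $\pi$ denote $J_n$ followed by projection onto those two factors of $\Pi_g^n$, giving a surjection $\pi_1(M)\twoheadrightarrow \Pi_g^2$. By Beauville-Catanese-Siu applied to each component, $\pi$ is realized by a holomorphic map $f:M\to Y_i\times Y_j$ onto a product of genus $g$ Riemann surfaces. If $f$ were surjective, the fundamental class pullback $f^*[Y_i\times Y_j]$ would be nonzero in $H^4(M)$ (the generic fiber is nonempty), contradicting Part C of Theorem~\ref{thm:abPn} which forces $\pi^*=0$ on $H^4(\Pi_g^2)=H^4(Y_i\times Y_j)$. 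Hence the image $f(M)$ is a curve, and Stein factorization yields a surjection $P_n(g)\twoheadrightarrow \Pi_h$ onto the fundamental group of its normalization. Corollary~\ref{cor:SigmaPg} gives $h\leq g$, but the composed surjection $\Pi_h\twoheadrightarrow \Pi_g^2$ forces $2h\geq 4g$ by comparing $H_1$-ranks, a contradiction.

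The remaining case is $X=X_1$. When $n=2$, the shearing $(x,y)\mapsto(x,y-x)$ on the elliptic curve gives $C_2(X_1)\cong X_1\times(X_1\setminus\{0\})$, hence $P_2(X_1)\cong \ZZ^2\times F_2$. K\"unneth then yields $\Sigma^1(P_2(X_1))=\{1\}\times Char(F_2)$, a two-dimensional positive component through the identity. If $P_2(X_1)$ were K\"ahler, Arapura's structure theorem for jump loci of K\"ahler groups would force this component to equal $f^*Char(\pi_1^{\mathrm{orb}}(C))$ for a surjection $f$ onto the orbifold fundamental group of a compact orbifold curve $C$ with $\chi^{\mathrm{orb}}(C)<0$; matching the dimension (the free rank of $\pi_1^{\mathrm{orb}}(C)^{\mathrm{ab}}$ must equal $2$, so the underlying curve has genus $1$) and the generic value $\dim H^1=1$ (forcing $\chi^{\mathrm{orb}}(C)=-1$) pins down $C$ as $X_1$ with two orbifold points of order $2$. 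However, $\pi_1^{\mathrm{orb}}(X_1;2,2)$ is a Fuchsian group of hyperbolic type (word-hyperbolic with only $\ZZ/2$ torsion subgroups, and virtually isomorphic to $\Pi_2$), so any homomorphism $f:\ZZ^2\times F_2\to \pi_1^{\mathrm{orb}}(X_1;2,2)$ has $f(\ZZ^2)$ abelian, hence virtually cyclic, and $f(F_2)$ contained in the centralizer of $f(\ZZ^2)$, which is either finite of order $2$ (if $f(\ZZ^2)$ is torsion) or infinite cyclic (if $f(\ZZ^2)$ is loxodromic); in either case the total image is virtually cyclic and hence proper, contradicting surjectivity. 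If $f(\ZZ^2)=1$, then $f$ factors through $F_2$, whose abelianization $\ZZ^2$ cannot surject onto the $3$-generated $\pi_1^{\mathrm{orb}}(X_1;2,2)^{\mathrm{ab}}=\ZZ^2\oplus\ZZ/2$. For $n\geq 3$, the $2n-2$ dimensionality of the components $T_{ij}\subset\Sigma^1(P_n(1))$ combined with Arapura's theorem forces the existence of a surjection $P_n(1)\twoheadrightarrow \Pi_h$ with $h\geq n-1$, directly contradicting Corollary~\ref{cor:SigmaP1}(4).

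The principal obstacle is the compact positive-genus case with $g=1$, $n=2$, where the Beauville-Catanese-Siu machinery does not apply and one must instead exploit the product decomposition $P_2(X_1)\cong \ZZ^2\times F_2$ in tandem with the structure theorem for jump loci and the virtually hyperbolic nature of the orbifold group $\pi_1^{\mathrm{orb}}(X_1;2,2)$.
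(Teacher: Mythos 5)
Most of your case analysis is sound and parallels the paper's: the ends-theorem argument for noncompact hyperbolic $X$ and for $\PP^1$ with $n\ge 4$ is exactly the paper's route, and your $g\ge 2$ argument is the paper's (Catanese plus part C of theorem~\ref{thm:abPn} plus corollary~\ref{cor:SigmaPg}) with a slightly cleaner endgame: comparing $H_1$-ranks along the surjection $\Pi_h\onto\Pi_g^2$ to get $h\ge 2g$ against $h\le g$, rather than the paper's count of $\dim f^*H^1$. (Two small elisions there: you should note that $\ker h_i$ is finitely generated, which is the hypothesis Catanese's theorem needs, and your parenthetical ``the generic fiber is nonempty'' is not by itself a proof that $f^*$ of the fundamental class is nonzero --- that is the paper's lemma~\ref{lemma:inj}, which genuinely uses the K\"ahler form; the Hopf manifold shows the claim fails otherwise.) Your treatment of $\C^*$ is a genuine and attractive simplification: the paper handles $\C^*$ with jump loci and theorem~\ref{thm:beauville}, whereas your decomposition $P_2(\C^*)\cong\Z\times F_2$ and the composite $P_n(\C^*)\onto P_2(\C^*)\onto F_2$, with finitely generated kernel, reduces everything to \cite{abr}.

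The one genuine gap is the case $g=1$, $n=2$. The structure theorem you invoke is misstated: for a compact K\"ahler $M$, an \emph{untranslated} positive-dimensional component of $\Sigma^1(\pi_1(M))$ comes from a fibration over a curve of genus $\ge 2$ and hence has dimension $\ge 4$ (theorem~\ref{thm:beauville}); genus-one orbifold fibrations with multiple fibres produce \emph{torsion-translated} components, not components through the identity (one can check directly via Fox calculus that for $\Gamma=\pi_1^{orb}(X_1;2,2)$ the generic character in the identity component of $Char(\Gamma)$ has $H^1=0$). So the orbifold alternative you set out to eliminate does not actually arise, and --- more seriously --- your elimination of it depends on the unjustified numerology ``generic $\dim H^1=1$ forces $\chi^{orb}=-1$, hence exactly two order-$2$ points.'' If the putative orbifold could instead have a single orbifold point, your contradiction evaporates, since $F_2$ \emph{does} surject onto $\langle a,b\mid [a,b]^m\rangle$. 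The fix is easy and twofold: either quote Beauville's theorem as the paper does (a $2$-dimensional untranslated component is already impossible, full stop), or --- better, and consistent with your own $\C^*$ argument --- observe that your decomposition $P_2(X_1)\cong\Z^2\times F_2$ exhibits a surjection onto $F_2$ with finitely generated kernel $\Z^2$, so \cite{abr} kills this case (and, composing with $P_n(X_1)\onto P_2(X_1)$, all of $g=1$) with no Hodge theory at all. Your $g=1$, $n\ge 3$ argument coincides with the paper's and is fine.
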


\begin{rmk}
The exceptions $P_2(\PP^1)$ and $ P_3(\PP^1)$ are either trivial or finite \cite[p 34]{birman}. 
\end{rmk}

\begin{cor}
  With the same assumptions as above, the group $B_n(X)$ is not K\"ahler.
\end{cor}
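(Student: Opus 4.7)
The plan is to reduce the corollary to Theorem~\ref{thm:hypRS} via the standard observation that the class of K\"ahler groups is stable under passing to finite-index subgroups. Recall the exact sequence
$$1\to P_n(X)\to B_n(X)\to S_n\to 1,$$
so $P_n(X)$ sits inside $B_n(X)$ as a normal subgroup of finite index $n!$. The implicit assumption in the corollary is the same exclusion as in the theorem (so $X\ne\PP^1$ or $n\ge 4$); in the two excluded cases $B_n(\PP^1)$ is finite and thus trivially K\"ahler, matching the exceptions for $P_n$.

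First I would verify the stability statement: if $G$ is the fundamental group of a compact K\"ahler manifold $M$ and $H\subset G$ is a subgroup of finite index, then $H$ is K\"ahler. Indeed, the covering $\widetilde M\to M$ corresponding to $H$ is a finite (hence compact) covering, and the pullback of a K\"ahler form on $M$ is a K\"ahler form on $\widetilde M$, while $\pi_1(\widetilde M)=H$.

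Applying this with $G=B_n(X)$ and $H=P_n(X)$: if $B_n(X)$ were K\"ahler, then $P_n(X)$ would also be K\"ahler, contradicting Theorem~\ref{thm:hypRS} in exactly the range where that theorem applies. Hence $B_n(X)$ is not K\"ahler under those same hypotheses.

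There is no real obstacle here; the argument is a one-line corollary of the theorem together with the finite-cover closure property of K\"ahler groups. The only subtlety worth mentioning is the matching of exceptional cases: the excluded pairs $(\PP^1,2)$ and $(\PP^1,3)$ must be excluded from the corollary as well, since the corresponding $B_n(\PP^1)$ are finite groups and finite groups are K\"ahler (indeed projective, by Serre's theorem recalled in the introduction).
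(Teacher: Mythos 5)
Your argument is exactly the one the paper gives: $P_n(X)$ is a finite-index subgroup of $B_n(X)$ via the exact sequence $1\to P_n(X)\to B_n(X)\to S_n\to 1$, and the class of K\"ahler groups is closed under passing to finite-index subgroups (realized by finite covers of compact K\"ahler manifolds), so the corollary follows from Theorem~\ref{thm:hypRS}. The proposal is correct and takes the same route; your extra remarks verifying the covering-space argument and matching the exceptional cases are fine but not needed beyond the paper's one-line justification.
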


\begin{proof}
 This is a consequence of the fact that a subgroup of finite index in
 a K\"ahler group is also  K\"ahler.
\end{proof}

The rest of the  section will be devoted to the proof of the theorem.
We split the proof into several cases.  The assumption that $n\ge 2$ will be in force for the rest of this section.

\begin{lemma}\label{lemma:artin}
  The theorem holds when $X$ is $\C$ or the disk.
\end{lemma}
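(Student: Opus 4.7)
The plan is to identify $P_n(X)$ with the classical Artin pure braid group $P_n:=P_n(\R^2)$ and then repeat the argument from \cite{arapura}. Since both $\C$ and the open disk are diffeomorphic to $\R^2$ (pick any diffeomorphism of $\R^2$ onto an open ball), their ordered configuration spaces are diffeomorphic, and hence $P_n(X)\cong P_n$.

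For $n=2$ the map $(x_1,x_2)\mapsto x_1-x_2$ shows $C_2(\R^2)\simeq \C\times \C^*$, so $P_2\cong\Z$. Since a K\"ahler group must have even first Betti number, $P_2$ is not K\"ahler.

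For $n\ge 3$ the plan is to produce a surjection $\varphi\colon P_n\twoheadrightarrow F_2$ onto the rank-two free group with finitely generated kernel, and then invoke the theorem of \cite{abr}: no K\"ahler group is an extension of a group with infinitely many ends by a finitely generated group. I build $\varphi$ in two stages. First, iterating the Fadell--Neuwirth short exact sequence
$$1\to F_{k-1}\to P_k\to P_{k-1}\to 1\qquad (3\le k\le n)$$
(which is exact on $\pi_1$ by lemma~\ref{lemma:FN}) gives a surjection $P_n\twoheadrightarrow P_3$ whose kernel is an iterated extension of the finitely generated free groups $F_{n-1},\ldots,F_3$, hence is finitely generated. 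Second, using the standard fact that $P_3/Z(P_3)\cong F_2$ together with $H^2(F_2,\Z)=0$, the central extension $1\to Z(P_3)\to P_3\to F_2\to 1$ splits, giving $P_3\cong F_2\times\Z$; projecting onto the first factor yields $P_3\twoheadrightarrow F_2$ with kernel $\Z$. Composition yields the desired $\varphi$, whose kernel, being an iterated extension of finitely generated groups, is itself finitely generated.

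The only subtlety is conceptual rather than computational: the Fadell--Neuwirth sequence naturally presents $P_n$ as an extension whose \emph{kernel} is free (hence many-ended) and whose \emph{quotient} is finitely generated, whereas the ABR theorem requires the opposite orientation. It is the direct product decomposition $P_3\cong F_2\times\Z$ that lets us flip the orientation at the base of the tower, and everything else is bookkeeping.
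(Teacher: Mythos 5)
Your proof is correct and follows essentially the same route as the paper, which simply cites \cite[\S 3]{arapura} and summarizes the key point as exhibiting $P_n$ as an extension of a group with infinitely many ends by a finitely generated group, then invoking \cite{abr} --- exactly the structure you construct explicitly via $P_n\twoheadrightarrow P_3\cong F_2\times\Z\twoheadrightarrow F_2$. Your separate treatment of $n=2$ (where $P_2\cong\Z$ fails to have infinitely many ends, so one falls back on odd first Betti number) is a sensible and correct supplement to the argument.
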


\begin{proof}
  In either case, $P_n(X)$ is a pure Artin braid group, and the result was
  proved in \cite[\S 3]{arapura}. The main point is that $P_n(X)$ can
  be written as an extension of group with infinitely many ends by a
  finitely generated group. Such a group cannot be K\"ahler by
a theorem of Bressler, Ramachandran and the author \cite[cor
4.3]{abr}.
We recall that a group has 
infinitely many ends if a Cayley graph for it does. 
\end{proof}

\begin{lemma}
  Theorem \ref{thm:hypRS} holds when $X$ is noncompact  nonsimply connected hyperbolic Riemann surface.
\end{lemma}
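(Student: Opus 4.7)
Since $X$ is a noncompact connected nonsimply-connected surface, $\pi_1(X)$ is a nontrivial free group. My plan is to split into two cases according to its rank.

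Suppose first that $\pi_1(X)$ has rank $\ge 2$; then it has infinitely many ends. Since $X$ and each $X\setminus\{p\}$ are open aspherical surfaces, the higher-homotopy obstructions in \eqref{eq:FN} vanish and I would extract the short exact sequence
$$1\to P_{n-1}(X\setminus\{p\})\to P_n(X)\to \pi_1(X)\to 1.$$
The kernel is finitely generated by induction via the same Fadell-Neuwirth fibrations (ultimately bottoming out at a $\pi_1$ of a finite-type open surface, which is free of finite rank). Thus $P_n(X)$ is an extension of a group with infinitely many ends by a finitely generated group, so by the theorem of Bressler-Ramachandran and the author \cite[cor 4.3]{abr}, it cannot be K\"ahler.

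It remains to handle the case $\pi_1(X)\cong \Z$. Here $X$ is topologically an open cylinder, homeomorphic to $\C^*$, so $P_n(X)\cong P_n(\C^*)$. The direct Fadell-Neuwirth extension would only produce a two-ended quotient, so \cite[cor 4.3]{abr} does not apply and a different handle is needed. I would reduce to the Artin case instead. The Fadell-Neuwirth bundle $C_{n+1}(\C)\to \C$ projecting onto a single coordinate has fiber $C_n(\C\setminus\{0\})=C_n(\C^*)$; since the base is contractible, $C_{n+1}(\C)\simeq C_n(\C^*)$. Consequently $P_n(X)\cong P_n(\C^*)\cong P_{n+1}(\C)$, the pure Artin braid group on $n+1\ge 3$ strands, which is not K\"ahler by Lemma \ref{lemma:artin}.

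The main obstacle is precisely this cyclic case: the naive extension only exhibits a two-ended quotient, so one must find a different handle; the identification $P_n(\C^*)\cong P_{n+1}(\C)$ is the cleanest way I see, and it lets us recycle the Artin result.
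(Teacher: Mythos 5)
Your argument is correct, and in the main case (where $\pi_1(X)$ is free of rank at least $2$) it coincides with the paper's proof: extract the Fadell--Neuwirth extension $1\to P_{n-1}(X-\{x_0\})\to P_n(X)\to \pi_1(X)\to 1$, observe that the quotient has infinitely many ends and the kernel is finitely generated, and invoke \cite[cor 4.3]{abr}. (One small point: your induction on finite generation of the kernel presupposes $X$ is of finite topological type; for infinite-type surfaces one should note, as the paper does, that $P_n(X)$ surjects onto the infinitely generated free group $\pi_1(X)$ and so is not even finitely generated, hence not K\"ahler.) Where you genuinely diverge is the case $\pi_1(X)\cong\Z$, i.e.\ $X$ a hyperbolic annulus such as a punctured disk. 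The paper's proof of this lemma passes over that case by asserting that $\pi_1(X)$ is nonabelian free, which does not literally follow from the stated hypotheses; the theorem survives because $P_n(X)$ depends only on the homeomorphism type of $X$, so the annulus is absorbed into the separate lemma that $P_n(\C^*)$ is not K\"ahler, proved by a much heavier route (proposition \ref{prop:Cstar} on cohomology jump loci together with Beauville's theorem \ref{thm:beauville}). Your alternative --- the isomorphism $P_n(\C^*)\cong P_{n+1}(\C)$ obtained from the Fadell--Neuwirth fibration $C_{n+1}(\C)\to\C$ with fiber $C_n(\C-\{0\})$ over a contractible base --- is a correct and standard fact, and it reduces the annulus case to lemma \ref{lemma:artin} in one line since $n+1\ge 3$. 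This is more elementary than the paper's treatment and would, for the purposes of theorem \ref{thm:hypRS} alone, render the paper's $\C^*$ lemma unnecessary; the paper's route does, however, yield extra information about $\Sigma^1(P_n(\C^*))$ that your shortcut does not.
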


\begin{proof}
This uses the same strategy as above.
The assumptions imply that $\pi_1(X)=F$ is nonabelian free. Such a
group has infinitely many ends.
We can assume that $F$ is finitely generated, since it would not be
K\"ahler otherwise.
 By \eqref{eq:FN}, we have an exact sequence
$$1\to P_{n-1}(X-\{x_0\})\to P_{n}(X)\to F\to 1$$
This means that $P_n(X)$ is an extension of a  group with infinitely many ends
by  a finitely generated group. Such a group cannot be K\"ahler as
noted above. 
\end{proof}

\begin{lemma}\label{lemma:P1}
 Theorem \ref{thm:hypRS} holds when $X=\PP^1$ and $n\ge 4$.
\end{lemma}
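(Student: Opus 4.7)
The plan is to use a Fadell--Neuwirth fibration, projecting to three coordinates, in order to reduce to the noncompact hyperbolic case already handled in the previous lemma. Since $PGL_2(\C)$ acts simply transitively on ordered triples of distinct points of $\PP^1$, I would identify $C_3(\PP^1)\cong PGL_2(\C)$. Topologically $SL_2(\C)$ deformation retracts onto $SU(2)\cong S^3$, so $\pi_1(PGL_2(\C))=\Z/2$ while $\pi_2(PGL_2(\C))=0$.

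Projection on the first three coordinates gives a Fadell--Neuwirth fibration $C_n(\PP^1)\to C_3(\PP^1)$ with fiber $C_{n-3}(Y)$, where $Y=\PP^1-\{p_1,p_2,p_3\}$. Inserting $\pi_2(C_3(\PP^1))=0$ into \eqref{eq:FN} yields a short exact sequence
$$1\to P_{n-3}(Y)\to P_n(\PP^1)\to \Z/2\to 1,$$
so that $P_{n-3}(Y)$ sits inside $P_n(\PP^1)$ as a subgroup of index two. Because any finite-index subgroup of a K\"ahler group is again K\"ahler (pass to the corresponding finite \'etale cover), it is enough to prove that $P_{n-3}(Y)$ is not K\"ahler. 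Now $Y$ is a noncompact hyperbolic Riemann surface with $\pi_1(Y)=F_2$, so for $n\ge 5$ we have $n-3\ge 2$ and the previous lemma applies directly to $P_{n-3}(Y)$. In the remaining case $n=4$, one has $P_1(Y)=F_2$, which is finitely generated with infinitely many ends, and hence fails to be K\"ahler by \cite[cor 4.3]{abr} applied to the trivial extension $1\to 1\to F_2\to F_2\to 1$.

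The hardest step I anticipate is not deep but conceptual: verifying that $\pi_2(PGL_2(\C))=0$, which is exactly what makes \eqref{eq:FN} degenerate into a short exact sequence and permits the descent to the finite-index subgroup. A secondary subtlety is the boundary case $n=4$, which falls just outside the range of the previous lemma and must be dispatched separately by the free-group-non-K\"ahler observation.
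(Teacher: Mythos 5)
Your proposal is correct and follows essentially the same route as the paper: the Fadell--Neuwirth fibration over $C_3(\PP^1)$, the vanishing of $\pi_2(C_3(\PP^1))$ and finiteness of $P_3(\PP^1)$, passage to the finite-index subgroup $P_{n-3}(\PP^1-\{p_1,p_2,p_3\})$, and the Arapura--Bressler--Ramachandran obstruction for extensions of groups with infinitely many ends. The only cosmetic differences are that you verify $\pi_1(C_3(\PP^1))=\Z/2$ and $\pi_2(C_3(\PP^1))=0$ directly via the identification $C_3(\PP^1)\cong PGL_2(\C)$ rather than citing Birman, and you delegate the case $n\ge 5$ to the preceding lemma while handling $n=4$ (where the subgroup is just $F_2$) separately.
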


\begin{proof}
 We note that $P_3(\PP^1)=\Z/2\Z$ and  $\pi_2(C_3(\PP^1))=0$ \cite[p 34]{birman}.
  Therefore, by \eqref{eq:FN}, we have an exact sequence
$$1\to \pi_1(\PP^1-\{x_1,x_2,x_3\})\to P_{4}(\PP^1)\to P_3(\PP^1)\to 1$$
This means that $P_4(\PP^1)$ contains a nonabelian free subgroup of
finite index. Applying \eqref{eq:FN} again
shows that, when $n\ge 4$, $P_n(\PP^1)$
 contains a subgroup of finite index which is an extension of a group with 
infinitely many ends by a finitely generated group. Therefore $P_n(\PP^1)$ cannot be K\"ahler.
\end{proof}

It remains to treat the case when $X$ is compact with positive genus or $\C^*$. This 
requires a completely new strategy. We start with a result
needed to justify one step of the proof.

\begin{lemma}\label{lemma:inj}
  Suppose that $f:M\to N$ is a proper surjective holomorphic map of 
 complex manifolds, with $M$ K\"ahler.  Then
  $f^*:H^{i}(N, \R)\to H^{i}(M,\R)$ is injective for all $i$.
\end{lemma}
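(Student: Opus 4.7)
The plan is to construct a left inverse to $f^*$ up to a positive scalar by combining the Kähler class on $M$ with integration along the fibers of $f$. Set $m = \dim_\C M$, $n = \dim_\C N$, and let $d = m - n$, which is nonnegative by the surjectivity of $f$. Fix a Kähler form $\omega$ on $M$. Because $f$ is proper, one has a Gysin pushforward
$$f_*\colon H^k(M,\R)\longrightarrow H^{k-2d}(N,\R)$$
satisfying the projection formula $f_*(f^*\alpha\cup\beta)=\alpha\cup f_*\beta$. Write $c := f_*[\omega^d] \in H^0(N,\R)=\R$.

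Applying the projection formula with $\beta=\omega^d$ gives, for every $\alpha\in H^i(N,\R)$,
$$f_*\bigl(f^*\alpha\cup\omega^d\bigr)\;=\;\alpha\cup c \;=\; c\,\alpha.$$
Hence the composition
$$H^i(N,\R)\xrightarrow{\,f^*\,}H^i(M,\R)\xrightarrow{\,\cup\,\omega^d\,}H^{i+2d}(M,\R)\xrightarrow{\,f_*\,}H^i(N,\R)$$
is multiplication by the scalar $c$, and injectivity of $f^*$ reduces to showing $c>0$.

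To verify positivity of $c$, invoke Sard's theorem (applied to the underlying smooth map) to pick a regular value $y\in N$ of $f$; by surjectivity $F:=f^{-1}(y)$ is nonempty, and by properness together with regularity it is a compact complex submanifold of $M$ of pure complex dimension $d$. Since $N$ is connected, the class $c$ is simply a real number, computable by fiber integration at $y$:
$$c\;=\;\int_F \omega^d,$$
which is strictly positive because $\omega|_F$ is a Kähler form on $F$. The one step requiring care is justifying the Gysin pushforward $f_*$ and the projection formula at the level of real cohomology for a proper holomorphic map that need not be a submersion; this is standard, available via the trace morphism $R^{2d}f_*\R_M\to\R_N$ defined on the smooth locus of $f$, or alternatively by pulling back to a suitable resolution and reducing to the classical submersive case. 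Once that is in hand, the argument above is essentially formal.
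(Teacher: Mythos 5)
Your proof is correct, and its engine is the same as the paper's: cup with $\omega^{d}$ (the Kähler form raised to the fiber dimension), push down to $N$, and use positivity of the fiber volume of a Kähler metric to see that the resulting endomorphism of $H^i(N,\R)$ is a nonzero scalar. The difference is in the packaging. You route everything through a Gysin pushforward $f_*$ and the projection formula for a proper holomorphic map that need not be a submersion, and you correctly flag that constructing $f_*$ in this generality (e.g.\ via Borel--Moore duality or a trace morphism extended over the critical locus) is the one step needing justification; the identification of $f_*[\omega^d]\in H^0(N)$ with the integral over a generic fiber also deserves a sentence (restrict to the dense connected submersion locus, where $f_*$ is honest fiber integration). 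The paper sidesteps all of this machinery: given a nonzero class $\alpha$ on $N$, it chooses a compactly supported closed form $\beta$ with $\int_N\alpha\wedge\beta\neq 0$ by Poincaré duality, and evaluates the single integral $\int_M f^*\alpha\wedge f^*\beta\wedge\omega^{d}$ by Fubini over the submersion locus (whose complement in $M$ has measure zero), obtaining $V\int_N\alpha\wedge\beta\neq 0$ directly. So the paper's argument is more elementary and self-contained, while yours is more structural and yields the slightly stronger statement that $f_*(\cdot\cup\omega^d)$ is a left inverse to $f^*$ up to a positive constant; to be complete you would need to supply the construction of $f_*$ and the projection formula rather than cite them as standard. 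One small caveat: your alternative suggestion of ``pulling back to a suitable resolution'' is not obviously available for an arbitrary proper holomorphic map of complex manifolds, so the Borel--Moore/trace route is the one to use.
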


\begin{proof}
Let $d$ be the complex dimension of $N$, and let $e=\dim_\C M- \dim_\C N$. Fix a K\"ahler metric
on $M$ with K\"ahler form $\omega$. Suppose that $\alpha$ is a  closed $C^\infty$ $i$-form defining a nonzero class in
  $H^{i}(N,\R)$. By Poincar\'e duality, there is a closed $2d-i$ form
  $\beta$ with compact support, such that
   $\int_N\alpha\wedge \beta \not=0$. Let $U\subset N$ be the
  largest open set over which $f$ is a submersion. 
Then $f|_{f^{-1}U}$  is a $C^\infty$ fibre bundle. One has that $ V=\int_{f^{-1}(y)}
  \omega^e$ is independent of $y\in U$, because the fibres are
  homologous. Furthermore, $V\not=0$,  since it
  is  $e!$ times the volume of a fibre with respect to the
  induced K\"ahler metric \cite[p 31]{gh}. By Fubini's theorem, and the fact that
  $M-f^{-1}U$ has measure zero
$$\int_M f^*\alpha\wedge (f^*\beta \wedge \omega^{e}) = \int_{f^{-1}U} f^*(\alpha\wedge \beta)\wedge
\omega^{e} =V\int_N\alpha\wedge \beta \not=0$$
Therefore $f^*\alpha$ defines a nonzero cohomology class.
\end{proof}

\begin{rmk}
  The result is false without the K\"ahler assumption. The Hopf manifold $M=(\C^2-0)/2^\Z$
maps holomorphically onto $\PP^1$, but the map $H^2(\PP^1) \to H^2(M)$ is
not injective because $H^2(M)=0$.
\end{rmk}

\begin{prop}
 If $g\ge
2$, then $P_n(g)$ is not K\"ahler when $n\ge 2$.
\end{prop}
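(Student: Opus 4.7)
The plan is to follow the outline given in the introduction, using the material developed in the previous section. Suppose, for contradiction, that $M$ is a compact K\"ahler manifold with $\pi_1(M)=P_n(g)$. First I would build a holomorphic map $f:M\to Y_1\times\cdots\times Y_n$ realising $J_n$ on $\pi_1$: for each $i$, composing $J_n$ with the $i$-th projection yields a surjection $P_n(g)\onto \Pi_g$ onto a surface group of genus $g\ge 2$, and by the Beauville--Catanese--Siu theorem this is induced by a surjective holomorphic map $f_i:M\to Y_i$ with connected fibres onto a smooth compact Riemann surface $Y_i$. Since $\pi_1(M)\onto \pi_1(Y_i)$ is then surjective, corollary~\ref{cor:SigmaPg} bounds the genus of $Y_i$ above by $g$, while the factorisation $\pi_1(Y_i)\onto \Pi_g$ bounds it below by $g$; hence $g(Y_i)=g$ and $\pi_1(Y_i)\cong\Pi_g$. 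Assembling the $f_i$ gives the map $f$, with $f_*=J_n$.

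Next I would combine parts A and C of theorem~\ref{thm:abPn} with lemma~\ref{lemma:inj} to force $f$ to factor through a single curve. Fix a pair $i<j$: part C applied with $m=2$ says the top class of $H^4(\Pi_g^2)=H^4(Y_i\times Y_j)$ pulls back to zero in $H^4(M)$, so by lemma~\ref{lemma:inj} the composition $f_{ij}=(f_i,f_j):M\to Y_i\times Y_j$ cannot be surjective; since each $f_i$ is surjective individually, the image of $f_{ij}$ is an irreducible curve $D_{ij}$ dominating both factors. The Stein factorisation of $f_{ij}$ supplies a smooth compact Riemann surface $\tilde D_{ij}$, a surjection $P_n(g)\onto \pi_1(\tilde D_{ij})$, and surjective maps $\tilde D_{ij}\to Y_i$ and $\tilde D_{ij}\to Y_j$. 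Corollary~\ref{cor:SigmaPg} bounds $g(\tilde D_{ij})$ above by $g$, while Riemann--Hurwitz applied to $\tilde D_{ij}\to Y_i$ (with $g(Y_i)\ge 2$) forces $g(\tilde D_{ij})\ge g$ with equality only if the map is an isomorphism. Hence both $\tilde D_{ij}\to Y_i$ and $\tilde D_{ij}\to Y_j$ are isomorphisms, and $D_{ij}$ is the graph of an isomorphism $\sigma_{ij}:Y_i\to Y_j$.

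Identifying each $Y_i$ with $Y:=Y_1$ via the $\sigma_{i1}$'s, the image of $f:M\to Y^n$ is then contained in the one-dimensional subvariety $\{(y,\sigma_{12}(y),\dots,\sigma_{1n}(y)):y\in Y\}$, so $f$ factors as $M\to Y\to Y^n$. Therefore $f^*H^1(Y^n,\C)$ lies in the image of $H^1(Y,\C)\to H^1(M,\C)$ and has $\C$-dimension at most $2g$. On the other hand, part A of theorem~\ref{thm:abPn} (tensored with $\C$) says that $f^*=J_n^*$ is an isomorphism $H^1(\Pi_g^n,\C)\xrightarrow{\sim} H^1(M,\C)$ of $\C$-dimension $2gn\ge 4g>2g$, a contradiction.

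The main obstacle, as I see it, is the first step: invoking Beauville--Catanese--Siu carefully enough that the target is a smooth Riemann surface (not merely an orbifold curve) and the fibres are connected, since both features are needed in order to deduce $g(Y_i)=g$ from corollary~\ref{cor:SigmaPg}. Once the map $f$ is in hand, the diagonal rigidity argument in the second paragraph and the $H^1$ rank count in the third are comparatively routine consequences of theorem~\ref{thm:abPn} and lemma~\ref{lemma:inj}.
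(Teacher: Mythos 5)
Your argument is essentially the same as the paper's: realize each $h_i=\pi_i\circ J_n$ holomorphically via Beauville--Catanese--Siu, use part C of Theorem~\ref{thm:abPn} together with Lemma~\ref{lemma:inj} to force each $f_{ij}$ to have one-dimensional image, use Corollary~\ref{cor:SigmaPg} plus Riemann--Hurwitz to see that image is the graph of an isomorphism, and contradict the rank of $H^1$ given by part A (the paper runs the count with the single pair $(1,2)$, getting $\dim f^*H^1\le 2(n-1)g<2ng$, rather than collapsing all factors to the diagonal, but this is immaterial). The one detail you flagged as an obstacle is resolved exactly as you'd hope: Catanese's refinement of Beauville--Siu applies because $\ker h_i$ is finitely generated, which follows from the Fadell--Neuwirth exact sequence \eqref{eq:FN}.
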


\begin{proof}
Let us assume that there is a compact connected K\"ahler manifold $M$
with $\pi_1(M)\cong P_n(g)$. We shall eventually produce
a contradiction.  We have a homorphism $J_n:\pi_1(M)\to \Pi_g^n$,
which we can decompose as a product of $n$ homomorphisms
$h_i:\pi_1(M)\to \Pi_g$. We note that $\ker h_i$ is finitely
generated by \eqref{eq:FN}. A theorem of Catanese \cite[thm 4.3]{catanese}, which refines
an earlier theorem of Beauville and Siu \cite[thm
2.11]{abc}, shows that there exists genus $g$ compact Riemann surfaces $Y_i$
and surjective holomorphic maps $f_i:M\to C_i$, with connected fibres, such that the induced maps
$\pi_1(M)\to \pi_1(Y_i)$ can be identified with $h_i$. Let 
$$f=
f_1\times\ldots\times  f_n:M\to Y_1\times \ldots \times Y_n=Y$$
and
$$f_{ij} = f_i\times f_j: M\to Y_i\times Y_j$$
We observe that $Y$ is a $K(\Pi_g^n,1)$ space, but $M$ is probably not
a $K(P_n(g),1)$. Nevertheless, by standard techniques \cite[thm
4.71]{hatcher}, we form a diagram of topological spaces
$$
\xymatrix{
 M\ar[r]^{f}\ar[d] & Y \\ 
 K(P_n(g),1)\ar[ru] & 
}
$$
which commutes up to homotopy. This means that we can factor the map $f^*$
on cohomology through $H^*(P_n(g))$. Similar remarks apply to
$f_{ij}$. Combining this observation with results of the previous
section allows us to draw several conclusions. By theorem~\ref{thm:abPn} (A),
\begin{equation}
  \label{eq:fstar}
  f^*:H^1(Y)\to H^1(M)
\end{equation}
 is an isomorphism, because Hurewicz gives  an
isomorphism $H^1(M)\cong H^1(P_n(g))$. Theorem~\ref{thm:abPn} (C) implies
that
\begin{equation}
  \label{eq:fijstar}
  f_{12}^*:H^4(Y_{12})\to H^4(M)
\end{equation}
is zero.
As a consequence, $f_{12}$ cannot be surjective by lemma \ref{lemma:inj}. 
So $\dim f_{12}(M)$ is either $0$ or $1$. The first possibility can be ruled out, because
 $f_1$ is
surjective and it factors through $f_{12}$. Therefore  $f_{12}(M)$ is a possibly
singular  compact complex
curve. Let $C$ be the normalization of $f_{12}(M)$, and let $g'$ be its genus. Then $f_{12}$
factors through $C$. Therefore, both $f_1$ and $f_2$ factors
through $C$. Thus $C$ is a branched cover of both $Y_1$ and $Y_2$.
This implies the genus $g'\ge g$. On the other hand, since $\pi_1(M)$
must surject onto $\pi_1(C)$,
   corollary~\ref{cor:SigmaPg}  implies $g'\le g$. So $g=g'$.  It follows that the maps $C\to Y_1$ and $C\to Y_2$
   are both isomorphisms.  This forces $\dim f_{12}^*H^1(Y_{12})=2g$.
We now have contradiction, because
$$\dim f_{12}^*H^1(Y_{12})+\sum_{i=3}^n \dim f_i^*H^1(Y_i)$$
is at most $2(n-1)g$, but it should be $2ng$ by \eqref{eq:fstar}

\end{proof}

When $M$ is compact K\"ahler, 
Green and Lazarsfeld \cite{gl} showed that positive dimensional
 components of certain cohomology jump loci are translates of subtori. Beauville
 \cite{beauville} gave  more precise information in the case of $\Sigma^1(\pi_1(M))$.

\begin{thm}[Beauville]\label{thm:beauville}
  Let $M$ be a  compact K\"ahler manifold. There is no
  untranslated  torus component of $\Sigma^1(\pi_1(M))$  of dimension
  $2$ or of odd dimension.
  A torus component of $\Sigma^1(\pi_1(M))$  of dimension
   $2g\ge 4$ is given by
  $f^*Char(C)$ for some holomorphic map $f:X\to C$, with connected
  fibres, onto a Riemann surface with genus $g$.
\end{thm}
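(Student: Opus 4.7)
The plan is to translate the subtorus $T$ into a linear subspace of $H^1(M,\C)$, endow this subspace with a holomorphic structure via Hodge theory and the K\"ahler hypothesis, and then invoke a theorem of Castelnuovo--de Franchis type to produce the fibration $f$.

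First, since $T$ is an untranslated subtorus it contains the identity, and its tangent space $W \subset H^1(M,\C)$ is a complex subspace of dimension $\dim T$ that is stable under complex conjugation, because $T$ is defined over the integral lattice $H_1(M,\Z)$. Next, I would carry out the infinitesimal analysis at the identity following Green and Lazarsfeld. Parametrising characters by $\rho_t = \exp(t\alpha)$ for $\alpha \in W$, one studies the family of twisted de Rham complexes computing $H^*(M,\C_{\rho_t})$. The requirement that $h^1$ jumps to all orders along $T$ forces $V := W \cap H^{1,0}(M)$ to be isotropic for the wedge pairing $H^{1,0}(M) \wedge H^{1,0}(M) \to H^{2,0}(M)$, and combined with the Hodge--Riemann bilinear relations and the conjugation stability of $W$, one obtains the refined statement $W = V \oplus \overline{V}$. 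This alone forces $\dim T = 2\dim V$ to be even, ruling out odd-dimensional components.

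Third, when $\dim T = 2g \ge 4$, the subspace $V$ has dimension $g \ge 2$ and satisfies $V \wedge V = 0$. Catanese's refinement of the Castelnuovo--de Franchis theorem then yields a surjective holomorphic map with connected fibres $f : M \to C$ onto a compact Riemann surface with $V = f^* H^0(C, \Omega^1_C)$; maximality of $V$ among isotropic subspaces forces $g(C) = \dim V = g$. Consequently $W = V \oplus \overline{V} = f^* H^1(C, \C)$, and the unique subtorus of $Char(\pi_1(M))$ with this tangent space, namely $f^* Char(\pi_1(C))$, coincides with $T$.

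It remains to exclude $\dim T = 2$. Here $\dim V = 1$ so Castelnuovo--de Franchis does not apply directly. The idea is to use that the corresponding tangent data factors through the Albanese, producing a map $f : M \to E$ to an elliptic quotient with $T \subset f^* Char(\pi_1(E))$; but for any nontrivial unitary character $\rho'$ of $\pi_1(E) = \Z^2$ one has $\chi(E, \rho') = 0$ and $H^1(E, \rho') = 0$, contradicting $T \subset \Sigma^1$. The principal obstacle throughout is step two: the transfer from the assumption $T \subset \Sigma^1$ to the purely algebraic data $V \wedge V = 0$ is the technical heart, relying on the $E_1$-degeneration of the twisted de Rham spectral sequence near the trivial local system, which is what makes the K\"ahler hypothesis indispensable.
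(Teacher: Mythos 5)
Your proposal takes a genuinely different route from the paper, which does no infinitesimal analysis at all: it intersects $\Sigma^1$ with the unitary characters, identifies these with $Pic^\tau(M)$ by Hodge theory, and quotes Beauville's Prop.~3.5 and Thm.~2.2 directly. Your plan is to rederive that structure theorem via Green--Lazarsfeld plus Castelnuovo--de Franchis, which is a legitimate strategy (it is essentially how Beauville's theorem is itself proved), and your treatment of the case $\dim T=2g\ge 4$ is correct \emph{granting} the identity $W=V\oplus\overline{V}$. But that identity is asserted, not derived: conjugation-stability of $W$, isotropy of $V=W\cap H^{1,0}(M)$, and the Hodge--Riemann relations do not by themselves rule out, say, a two-dimensional conjugation-stable $W$ with $W\cap H^{1,0}(M)=0$; nothing in the linear algebra forces $W$ to be a sub-Hodge structure of $H^1(M,\C)$. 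That $W$ really has the form $f^*H^1(C,\C)$ is precisely the content of the Green--Lazarsfeld/Beauville theorem, so the argument is empty at exactly the point you yourself identify as the technical heart.

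The concrete failure is in the exclusion of $\dim T=2$. Having produced $f:M\to E$ with $T=f^*Char(\pi_1(E))$, you derive a contradiction from $H^1(E,\C_{\rho'})=0$ for $\rho'\ne 1$; but membership in $\Sigma^1(\pi_1(M))$ is about $H^1(M,\C_{f^*\rho'})$, and the Leray sequence gives a short exact sequence $0\to H^1(E,\C_{\rho'})\to H^1(M,\C_{f^*\rho'})\to H^0(E,\C_{\rho'}\otimes R^1f_*\C_M)\to 0$, so vanishing of the first term says nothing about the third. One must show that $H^0(E,\C_{\rho'}\otimes R^1f_*\C_M)$ (equivalently $H^0(E,L^{-1}\otimes R^1f_*\OO_M)$ together with its conjugate piece) cannot be nonzero for \emph{every} $L\in Pic^0(E)$, and this is exactly where Beauville's proof does real work, using semipositivity of $f_*\omega_{M/E}$ to control the degree of $R^1f_*\OO_M$. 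The phenomenon you are dismissing is not vacuous: elliptic fibrations with multiple fibres do produce positive-dimensional (torsion-translated) components of $\Sigma^1$ lying over a genus-one base, so the untranslated hypothesis must actually be used here. To repair the write-up, either quote Beauville's Thm.~2.2 (or \cite[chap V, prop 1.7]{arapura2}) as the paper does, or supply proofs of these two steps.
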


\begin{proof}
This follows immediately from \cite[chap V, prop 1.7]
{arapura2}. However, it was already implicitly contained Beauville's
paper \cite{beauville}. Let us explain how to deduce it from results
proved there, since it is a bit more direct than going through \cite{arapura2}.
By Hodge theory, we have an isomorphism of Lie groups
$$Hom(\pi_1(M), U(1))\cong Pic^\tau(M)$$
where the group on the right is the group of line bundles with torsion
first Chern class.
Let 
$$\pm S^1(M)=\{L\in Pic^\tau(M)\mid H^1(M, L^{\pm 1})\not=0\}$$
By \cite[prop 3.5]{beauville},  we can identify
$$\Sigma^1(X)\cap Hom(\pi_1(M), U(1)) = S^1(M)\cup -S^1(M)$$
An algebraic torus in $\Sigma^1(M)$ of complex dimension $d$ maps to a torus on the right of real dimension $d$.
Such a torus would have to be of the form $f^*Pic^0(C)$ for a surface of genus $g=d/2>1$ by  \cite[thm 2.2]{beauville}.
Since the complex Zariski closure of $f^*Pic^0(C)$ can be identified with $f^*Char(C)$, we obtain
statement given in the theorem.

\end{proof}

The following lemma completes the proof of theorem~\ref{thm:hypRS}.

\begin{lemma}
The group $P_n(1)$ is not K\"ahler.  
\end{lemma}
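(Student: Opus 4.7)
The plan is to argue by contradiction: suppose $P_n(1) \cong \pi_1(M)$ for some compact connected K\"ahler manifold $M$, and combine Beauville's structure theorem (Theorem~\ref{thm:beauville}) with the explicit description of $\Sigma^1(P_n(1))$ provided by Corollary~\ref{cor:SigmaP1}. That corollary identifies $\Sigma^1(P_n(1))$ as a finite union of the subtori $T_{ij} = \{(\rho_1,\ldots,\rho_n) \mid \rho_i\rho_j = 1\}$, each of complex dimension $2n-2$. Each $T_{ij}$ is cut out by a single character equation, so it is a closed subgroup of $Char(P_n(1))$ containing the identity, and is therefore untranslated in the sense of Beauville's theorem.

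The case $n=2$ is immediate. Here $\Sigma^1(P_2(1)) = T_{12}$ is a single untranslated torus component of complex dimension $2$. Theorem~\ref{thm:beauville} forbids any such component from appearing in $\Sigma^1(\pi_1(M))$ for $M$ compact K\"ahler, giving the contradiction.

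For $n \geq 3$, each $T_{ij}$ has even complex dimension $2n-2 \geq 4$, so the second half of Theorem~\ref{thm:beauville} applies. Taking, say, $T_{12}$, there must exist a surjective holomorphic map $f : M \to C$ with connected fibres onto a compact Riemann surface $C$ of genus $g = n-1$ such that $T_{12} = f^*Char(C)$. Because $f$ has connected fibres, the induced map on fundamental groups is surjective, yielding a surjection $P_n(1) = \pi_1(M) \twoheadrightarrow \pi_1(C) = \Pi_{n-1}$. This directly contradicts part (4) of Corollary~\ref{cor:SigmaP1}, which rules out any surjection $P_n(1) \twoheadrightarrow \Pi_h$ with $h \geq n-1$ when $n \geq 3$.

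I do not expect any real obstacle: the proof is essentially a direct assembly of the ingredients already in hand. The only points worth checking are (i) that the $T_{ij}$ are genuinely untranslated so that Beauville's theorem may be applied verbatim, and (ii) that the dichotomy in Beauville's theorem between dimension $2$ and dimension $\geq 4$ lines up exactly with the two cases $n=2$ and $n\geq 3$, so that the genus $n-1$ supplied by the dimension count $2g = 2n-2$ lands precisely in the range forbidden by Corollary~\ref{cor:SigmaP1}(4). Both are essentially bookkeeping.
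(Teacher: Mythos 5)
Your argument is correct and follows essentially the same route as the paper: invoke Corollary~\ref{cor:SigmaP1} to exhibit the $(2n-2)$-dimensional tori $T_{ij}$ in $\Sigma^1(P_n(1))$, rule out $n=2$ by the dimension-$2$ prohibition in Theorem~\ref{thm:beauville}, and for $n\ge 3$ use the second half of that theorem to produce a surjection onto $\Pi_{n-1}$ contradicting Corollary~\ref{cor:SigmaP1}(4). Your explicit checks that the $T_{ij}$ are untranslated and are genuine components are welcome bookkeeping that the paper leaves implicit.
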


\begin{proof}
  Suppose $M$ is a compact K\"ahler manifold with $\pi_1(M)\cong
  P_n(1)$. Then, by corollary~\ref{cor:SigmaP1}, $\Sigma^1(\pi_1(M))$
  contains a torus of dimension $2n-2$.  When $n=2$, this contradicts 
  theorem \ref{thm:beauville}. When $n\ge 3$, the theorem
  implies that we have a surjective homomorphism $P_n(1)\to \Pi_g$,
  with $g=n-1$, but this gives a contradiction to the last part of corollary~\ref{cor:SigmaP1}.
\end{proof}

\begin{lemma}
 The group $P_n(\C^*)$ is not K\"ahler.  
\end{lemma}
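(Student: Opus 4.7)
The plan is to adapt the strategy of the preceding lemma for $P_n(1)$, substituting Theorem~\ref{thm:beauville} and Corollary~\ref{cor:Cstar} for their counterparts. I would begin by supposing, for contradiction, that $M$ is a compact K\"ahler manifold with $\pi_1(M) \cong P_n(\C^*)$. By Corollary~\ref{cor:Cstar}(1), $\Sigma^1(\pi_1(M))$ contains the subtorus $T_{12}$, which has complex dimension $n-1 \ge 1$ and passes through the trivial character, since the defining equation $\rho_1 \rho_2 = 1$ is satisfied at $\rho = 1$.

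I would then choose an irreducible component $T'$ of $\Sigma^1(\pi_1(M))$ containing $T_{12}$. Because $M$ is K\"ahler, every positive-dimensional irreducible component of $\Sigma^1$ is a translate of a subtorus (by the Green--Lazarsfeld/Arapura result which underlies Theorem~\ref{thm:beauville}). Since $1 \in T_{12} \subset T'$, any such translate must in fact be an untranslated subtorus, so $T'$ is a subtorus of complex dimension at least $n-1$.

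Next I would apply Theorem~\ref{thm:beauville}. If $\dim T'$ is odd or equal to $2$, this directly contradicts the first assertion of the theorem. Otherwise $\dim T' = 2g \ge 4$, and the theorem produces a surjective holomorphic map $f : M \to C$ with connected fibres to a Riemann surface $C$ of genus $g \ge 2$ satisfying $T' = f^* Char(C)$. The induced surjective homomorphism $f_* : P_n(\C^*) \to \Pi_g$ then pulls $Char(\Pi_g)$ back to $T' \supseteq T_{12}$, and this contradicts Corollary~\ref{cor:Cstar}(3).

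The main delicate point is the passage from the torus $T_{12}$, which in Corollary~\ref{cor:Cstar} is exhibited only inside the subtorus $J_n^* Char(\pi_1(\C^*)^n)$ of the full character variety, to an irreducible component of the complete jump locus $\Sigma^1(\pi_1(M))$ to which Beauville's classification can be applied. The observation that $T_{12}$ contains the trivial character makes this transition essentially automatic: any component containing $T_{12}$ passes through the identity and is therefore an untranslated subtorus. With that in hand the remainder of the argument is mechanical, and no additional geometric input beyond the results already established is required.
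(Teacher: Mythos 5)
Your proof is correct and follows essentially the same route as the paper: locate $T_{12}$ inside $\Sigma^1(\pi_1(M))$, pass to the (necessarily untranslated) torus component containing it, and play Theorem~\ref{thm:beauville} off against Corollary~\ref{cor:Cstar}(3). The only divergence is at $n=2$: the paper disposes of that case separately by observing that $\rank H_1(P_2(\C^*))=3$ is odd, contradicting Hodge theory, whereas you run the Beauville argument uniformly for all $n\ge 2$, which also works since a component containing the $1$-dimensional torus $T_{12}$ still cannot have dimension $2$ or odd dimension.
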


\begin{proof}
Let $X=\C^*$.
  Suppose $M$ is a compact K\"ahler manifold with $\pi_1(M)\cong
  P_n(X)$. The first Betti number of $M$ is even by Hodge theory.
  However proposition \ref{prop:Cstar} implies that $\rank H_1(P_2(X))=3$.
  So we must have $n\ge 3$. By corollary \ref{cor:Cstar} $\Sigma^1(\pi_1(M))$
  must contain an $(n-1)$ dimensional torus $T_{12}$. Let $T\subset \Sigma^1(\pi_1(M))$
be the largest torus containing $T_{12}$. We must have  $\dim T\ge 4$ by theorem \ref{thm:beauville}.
The same theorem would imply that $T= f^*Char(C)$ for some surjection $f:M\to C$ onto a curve
of genus $g\ge 2$. Let $h:P_n(X)\to \Pi_g$ be the corresponding
surjective homomorphism. We would
have $T_{12}\subseteq h^*Char(\Pi_g)$. However, this  contradicts corollary \ref{cor:Cstar}.
\end{proof}

\section{Braid groups of   higher dimensional projective manifolds}

A basic fact observed by Birman \cite[thm 1.5]{birman} is that pure
braid groups become uninteresting above two real dimensions.

\begin{thm}[Birman]\label{thm:birman}
  If $X$ is a $C^\infty$ manifold of  (real) dimension $d\ge 3$, then
  $J_n$ induces an isomorphism
$$P_n(X)\cong \pi_1(X)^n$$
\end{thm}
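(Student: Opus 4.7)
The plan is to prove the theorem by a direct codimension/transversality argument, bypassing the Fadell--Neuwirth sequence \eqref{eq:FN} entirely. Note that by construction $J_n$ is the map on fundamental groups induced by the open inclusion
\[
j:C_n(X)=X^n-\Delta_n(X)\hookrightarrow X^n,
\]
and that $\pi_1(X^n)=\pi_1(X)^n$ by the K\"unneth formula for homotopy groups. Therefore it suffices to show that $j_*:\pi_1(C_n(X))\to\pi_1(X^n)$ is an isomorphism.

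The decisive point is that when $d=\dim_{\R}X\ge 3$, each closed submanifold $\Delta_{n,ij}(X)\cong X^{n-1}\subset X^n$ has real codimension $d\ge 3$, and the pairwise intersections $\Delta_{n,ij}\cap\Delta_{n,k\ell}$ (the loci where three or more coordinates coincide) have strictly greater codimension. Thus $\Delta_n(X)$ is a closed subset of $X^n$ which, away from a subset of codimension $\ge 2d$, is a smooth submanifold of codimension $\ge 3$.

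Given this, the standard transversality argument applies: for surjectivity, any loop $\gamma:S^1\to X^n$ is a map from a $1$-dimensional source and can be perturbed by a $C^\infty$-small homotopy to be transverse to each stratum of $\Delta_n(X)$, hence to miss all strata of codimension $\ge 2$, so $\gamma$ is homotopic into $C_n(X)$. For injectivity, any null-homotopy $H:D^2\to X^n$ of a loop in $C_n(X)$ is a map from a $2$-dimensional source, and can similarly be perturbed (rel.\ boundary) to be transverse to each stratum, hence to miss all strata of codimension $\ge 3$; this produces a null-homotopy inside $C_n(X)$.

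No serious obstacle is anticipated: the only mildly delicate point is that $\Delta_n(X)$ is not a smooth submanifold but a stratified set, so the transversality step must be carried out stratum by stratum (or one handles the top stratum first, then iterates on the lower-dimensional strata). Once this is in place, $j_*$ is an isomorphism, and since this map equals $J_n$ by definition, the theorem follows for every $n\ge 1$ at once, without any induction.
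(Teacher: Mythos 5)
Your proposal is correct in substance but takes a genuinely different route from the paper's. The paper reduces everything to a single clean lemma (Lemma~\ref{lemma:purity}): removing a closed submanifold $Z$ of codimension $\ge 3$ from a manifold does not change $\pi_1$. This is proved by taking a closed tubular neighbourhood $T$ of $Z$, observing that $\partial T$ is an $S^{d-1}$-bundle over $Z$ with $d-1\ge 2$, so that $\pi_1(\partial T)\cong \pi_1(Z)\cong \pi_1(T)$, and applying Van Kampen. The lemma is then applied repeatedly, deleting one diagonal $\Delta_{ij}$ at a time: at each stage, $\Delta_{ij}$ minus the previously removed diagonals is an honest closed submanifold of codimension $d\ge 3$ in the current open manifold, so the stratification issue you worry about simply never arises. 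Your argument instead works by general position on the source (a circle or a disk) rather than by surgery near the target; it is equally standard and arguably more self-contained, but it forces you to deal with the fact that $\Delta_n(X)$ is only a stratified set. Your sketch of how to do that is fine except for the order of operations: one should first arrange disjointness from the \emph{deepest} strata and work upward, because the top stratum $\Delta_{ij}-\bigcup_{(k,\ell)\ne(i,j)}\Delta_{k\ell}$ is not closed in $X^n$, and arbitrarily small perturbations need not achieve transversality to a non-closed submanifold near its frontier. Once the deeper strata (codimension $\ge 2d$) are avoided, the top stratum is closed in the remaining open manifold and the usual transversality theorem applies. With that ordering corrected, both proofs rest on exactly the same numerology, namely that each $\Delta_{ij}$ has real codimension $d\ge 3$ in $X^n$, and the choice between them is largely a matter of taste.
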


This can be proved by applying the following lemma    repeatedly  to each
$\Delta_{ij}\subset X^n$.

\begin{lemma}\label{lemma:purity}
  If $Z\subset X$ is a closed submanifold of codimension $3$ or more, $\pi_1(X-Z)\cong\pi_1(X)$.
\end{lemma}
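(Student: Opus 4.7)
The plan is a standard general-position argument. Let $c = \mathrm{codim}_X Z \ge 3$, choose a basepoint $x_0 \in X - Z$ (possible since $X - Z$ is open and dense in $X$), and let $j : X - Z \hookrightarrow X$ denote the inclusion. I want to show that the induced map $j_* : \pi_1(X - Z, x_0) \to \pi_1(X, x_0)$ is bijective; this uses only that $Z$ is a closed smooth submanifold of sufficiently high codimension, nothing about holomorphic structure.

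For surjectivity, take any loop $\gamma : (S^1, *) \to (X, x_0)$. Smooth $\gamma$ rel the basepoint, then apply relative transversality to assume $\gamma \pitchfork Z$. Since $\dim S^1 - c = 1 - c \le -2$, the expected dimension of $\gamma^{-1}(Z)$ is negative, which forces $\gamma^{-1}(Z) = \emptyset$. Thus $\gamma$ is homotopic rel basepoint to a loop in $X - Z$, so $j_*$ is surjective.

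For injectivity, suppose $\gamma_0, \gamma_1 : S^1 \to X - Z$ are loops homotopic in $X$ via some map $H : S^1 \times [0,1] \to X$ with $H(\cdot, i) = \gamma_i$. Smooth $H$ rel the boundary $S^1 \times \{0,1\}$ (where it is already smooth), then perturb it rel the boundary so that $H \pitchfork Z$. Now the expected dimension of $H^{-1}(Z)$ is $2 - c \le -1$, hence $H^{-1}(Z) = \emptyset$, and the perturbed $H$ is a homotopy entirely inside $X - Z$. So $\gamma_0$ and $\gamma_1$ are homotopic in $X - Z$, giving injectivity.

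The only mild technical subtlety is that in each step the transversality perturbation must be carried out relative to the fixed data (the basepoint for the loop, the two boundary circles for the homotopy); but this is exactly the content of the standard relative transversality theorem for smooth maps into a smooth manifold, so there is no genuine obstacle. The dimension count $c \ge 3$ is used precisely because the homotopies in the injectivity step are $2$-dimensional, and one needs $\dim(\text{source}) < c$ to push everything off of $Z$.
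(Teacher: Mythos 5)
Your proof is correct, but it takes a different route from the paper. You argue by general position: smooth a loop (respectively a homotopy of loops) and perturb it rel the fixed data to be transverse to $Z$, whereupon the dimension counts $1-c<0$ and $2-c<0$ force the image to miss $Z$ entirely; the hypothesis $c\ge 3$ enters only in the second count. The paper instead takes a closed tubular neighbourhood $T$ of $Z$, observes that $\partial T$ is an $S^{c-1}$-bundle over $Z$ so that $\pi_1(\partial T)\cong\pi_1(Z)\cong\pi_1(T)$ when $c\ge 3$ (this is where the codimension hypothesis is used there), and applies Van Kampen to the decomposition $X=(X-T^o)\cup T$ to collapse the amalgamated product down to $\pi_1(X-T^o)\cong\pi_1(X-Z)$. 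Both arguments are standard and complete. Your transversality argument is arguably more elementary (no need for the tubular neighbourhood theorem or Van Kampen with nontrivial amalgamation) and generalizes immediately to show that $\pi_i(X-Z)\to\pi_i(X)$ is an isomorphism for $i<c-1$ and surjective for $i=c-1$; the paper's argument makes the role of the sphere bundle $\partial T$ explicit, which clarifies exactly what fails in codimension $2$ (the circle fibre contributes a meridian loop, so one only gets surjectivity). One small point you leave implicit: connectivity of $X-Z$, needed for the statement to be about a single fundamental group, follows from the same transversality count applied to paths, so there is no gap.
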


\begin{proof}
Let $d$ be the codimension of $Z$.
  Let $T\subset X$ be a closed tubular neighbourhood of $Z$, and let
  $T^o=T-\partial T$ be the open neighbourhood. Then $\partial T$ is an
  $S^{d-1}$-bundle over $Z$. Therefore $\pi_1(\partial T)\cong
  \pi_1(Z)\cong \pi_1(T)$. Therefore, by Van Kampen,
$$\pi_1(X) \cong
  \pi_1(X-T^o)*_{\pi_1(\partial T)} \pi_1(T) \cong \pi_1(X-T^o)\cong \pi_1(X-Z)$$
\end{proof}

The full braid group is a bit more interesting,
especially in the context of projective groups, where there are very
few known constructions.
We recall that  given two groups $H$, $G$, and a $G$-set $I$,
the wreath product  $H\wr_I G$ is  the semidirect product $H^I\rtimes
G$. When $G=S_n$, we can take $I=\{1,2\ldots, n\}$ with the standard $G$-action.
When we write $H\wr S_n$, without specifying $I$, this is what we mean.
Clearly a general wreath product $H\wr_I G$, where $G$ is finite  and  $I$ is a finite and faithful $G$-set, can be embedded as a subgroup of
  finite index in a standard wreath product $H\wr S_n$.
We have the following corollary to theorem \ref{thm:birman}.

\begin{cor}\label{cor:birman}
 If $X$ is $C^\infty$ manifold of  (real) dimension $d\ge 3$, then
 $$B_n(X)\cong \pi_1(X)\wr S_n$$
\end{cor}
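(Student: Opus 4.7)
The plan is to exhibit $B_n(X)$ as a split extension
$$1 \to P_n(X) \to B_n(X) \to S_n \to 1$$
in which $S_n$ acts on $P_n(X) \cong \pi_1(X)^n$ by permuting the coordinates. Theorem \ref{thm:birman} already identifies the kernel with $\pi_1(X)^n$ via $J_n$, so two tasks remain: (i) construct a section $s : S_n \to B_n(X)$, and (ii) verify that the induced conjugation action on $\pi_1(X)^n$ is the standard permutation action.

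For (i), I would arrange the basepoint configuration $(x_1,\ldots,x_n)$ to lie inside a small open ball $B \subset X$ diffeomorphic to $\mathbb{R}^d$. The inclusions $C_n(B) \hookrightarrow C_n(X)$ and $SC_n(B) \hookrightarrow SC_n(X)$ induce a morphism from $1 \to P_n(B) \to B_n(B) \to S_n \to 1$ to $1 \to P_n(X) \to B_n(X) \to S_n \to 1$ that is the identity on $S_n$. Since $d \ge 3$, Theorem \ref{thm:birman} applied to $B$ (or, equivalently, a direct application of Lemma \ref{lemma:purity}) gives $P_n(B) = \pi_1(B)^n = 1$, so $B_n(B) \cong S_n$. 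Composing the inverse isomorphism with the map $B_n(B) \to B_n(X)$ provides the required section $s$.

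For (ii), I would proceed geometrically. A transposition $\tau_{ij} \in S_n$ lifts under $s$ to a braid supported in $B$ whose underlying motion swaps $x_i$ with $x_j$ along two disjoint embedded arcs in $B$ while keeping the remaining $x_k$ fixed. Under $J_n$, a pure braid is encoded as an $n$-tuple of loops based at $x_1,\ldots,x_n$; conjugation by the swap braid then exchanges the $i$th and $j$th loops (the arcs providing the canonical identification of $\pi_1(X,x_i)$ with $\pi_1(X,x_j)$) and fixes the others. Since transpositions generate $S_n$, this is precisely the standard permutation action, and one concludes $B_n(X) \cong \pi_1(X)^n \rtimes S_n = \pi_1(X) \wr S_n$.

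The main obstacle is step (ii): one must verify that the swap arcs and the representing loops for the $\gamma_k$ can be arranged to be mutually disjoint, so that conjugation by the transposition braid yields a clean permutation of strands rather than a mixture of permutation and linking. This is exactly where the hypothesis $d \ge 3$ is indispensable — in codimension $\ge 2$, a small perturbation of the arcs and loops renders them disjoint, and there are no residual linking contributions. Everything else is a routine diagram chase.
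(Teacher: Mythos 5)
Your argument is correct, but it follows a genuinely different route from the paper's. The paper explicitly declines to split the extension $1\to \pi_1(X)^n\to B_n(X)\to S_n\to 1$ inside $B_n(X)$ itself: instead it thickens $X$ to $X\times \R^4$, realizes $B_n(X)$ (after the identifications $\pi_1(U)\cong B_n(X\times\R^4)\cong\pi_1(V)\cong B_n(X)$) as the fundamental group of a fibre bundle $U\to SC_n(\R^4)$ with fibre $X^n$ admitting an evident global section $u\mapsto (u,0)\bmod S_n$. From that bundle picture both the splitting and the fact that $S_n=\pi_1(SC_n(\R^4))$ acts on $\pi_1(X^n)=\pi_1(X)^n$ by permuting factors are structural, read off from the monodromy of the covering $C_n(\R^4)\to SC_n(\R^4)$. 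You instead build the section directly by confining the basepoints to a Euclidean ball $B\subset X$ and using $P_n(B)=1$ (valid since $d\ge 3$, by Lemma \ref{lemma:purity}) to get $B_n(B)\cong S_n\hookrightarrow B_n(X)$; this is a perfectly good, arguably more elementary, splitting. The price is step (ii): you must verify by hand that conjugation by the transposition braids induces the standard permutation of $\pi_1(X)^n$, which requires choosing representatives in general position and fixing compatible basepoint identifications $\pi_1(X,x_i)\cong\pi_1(X,x_j)$ (harmless here since the connecting arcs lie in the simply connected ball $B$, and since $J_n$ is an isomorphism the computation is independent of the chosen representatives). You correctly flag where $d\ge 3$ enters, and the sketch is sound, but this is exactly the bookkeeping the paper's fibration argument is designed to avoid. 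In short: your approach buys directness and avoids the auxiliary space $X\times\R^4$ and the two fibration comparisons; the paper's buys an action that is visibly the permutation action with no general-position argument needed.
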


\begin{proof}
Covering space theory immediately shows that $B_n(X)$ is an extension
of $S_n$ by $\pi_1(X)^n$, but it is not clear it splits. Instead we
work with a suitable fibration.
  Let $R=\R^4$. Then $B_n(R)= S_n$ by theorem \ref{thm:birman}.
Let $p_1:(X\times R)^n=X^n\times R^n\to X^n$ and $p_2:(X\times R)^n\to R^n$  denote
the projection onto the first and second factors. 

 Let $\tilde U= (X\times
R)^n-p_2^{-1}\Delta_n(R)$, $\tilde V= (X\times R)^n-
p_1^{-1}\Delta_n(X) $, $U = \tilde U/S_n$, and
$V=\tilde V/S_n$. The projection
$p_2:\tilde U\to C_n(R)$ induces  a fibration $U\to SC_n(R)$ with
fibre $X^n$. Since $\pi_2(SC_n(R))=\pi_2(C_n(R))=0$ by  lemma~\ref{lemma:FN}, we have an exact sequence
$$1\to \pi_1(X)^n\to \pi_1(U)\to \pi_1(SC_n(R))\to 1$$
Moreover, the section $SC_n(R)\to U$ defined by $u\mapsto (u,0)\mod S_n$, splits the
sequence. By the theorem $\pi_1(SC_n(R))=S_n$. It remains to identify
$\pi_1(U)$ with $B_n(X)$. We have compatible inclusions $\tilde U
\subset C_n(X\times R)$ and $U\subset SC_n(X\times R)$.
This gives rise to a diagram
$$
\xymatrix{
  1\ar[r] & \pi_1(\tilde U)\ar[r]\ar[d]^{\cong} & \pi_1(U)\ar[r]\ar[d]& S_n\ar[r]\ar[d]^{=} & 1 \\ 
 1\ar[r] & P_n(X\times R)\ar[r] & B_n(X\times R)\ar[r] & S_n\ar[r] & 1
}
$$
where the first vertical map is an isomorphism by the previous
lemma. Therefore the middle vertical map is an isomorphism.
A similar argument shows that $V\to SC_n(X)$ is a fibration with fibre
$R^n$, and $B_n(X\times R)\cong \pi_1(V)$. Putting these facts
together gives an isomorphism $\pi_1(U)\cong B_n(X)$.

\end{proof}

\begin{thm}\label{thm:3}
 If $X$ is a  projective manifold of (complex) dimension $d\ge 2$. Then
 \begin{equation}\label{eq:BmWr}
 B_n(X) \cong \pi_1(X)\wr S_n,
\end{equation}
and this group is projective
\end{thm}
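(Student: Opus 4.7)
The isomorphism \eqref{eq:BmWr} is immediate from Corollary \ref{cor:birman}, since a complex manifold of complex dimension $d\ge 2$ has underlying real dimension $\ge 4\ge 3$. The nontrivial content of the theorem is thus the projectivity assertion, and my plan is to exhibit an explicit smooth projective variety whose fundamental group is $\pi_1(X)\wr S_n$. The natural candidate is $X\langle n\rangle:=X[n]/S_n$, where $X[n]$ is the Fulton--MacPherson compactification of the ordered configuration space $C_n(X)$. I will use the following standard properties of this construction: $X[n]$ is a smooth projective variety, obtained as an iterated blow-up of $X^n$ along (proper transforms of) the diagonals $\Delta_{ij}$; it contains $C_n(X)$ as a Zariski open subset; and the $S_n$-action on $X^n$ lifts to a \emph{free} action on $X[n]$. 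Consequently $X\langle n\rangle$ is a smooth projective manifold, and it remains only to compute its fundamental group.

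I first claim that $\pi_1(X[n])=\pi_1(X)^n$. For this it suffices to show that if $Y$ is smooth and $Z\subset Y$ is a smooth closed submanifold of complex codimension $\ge 2$, then the blow-up $\tilde Y\to Y$ with exceptional divisor $E$ satisfies $\pi_1(\tilde Y)=\pi_1(Y)$. Indeed, $\tilde Y\smallsetminus E\cong Y\smallsetminus Z$, so Lemma \ref{lemma:purity} (applicable because the real codimension of $Z$ is $\ge 4$) gives $\pi_1(\tilde Y\smallsetminus E)\cong\pi_1(Y)$; moreover the meridian of $E$ corresponds to a small loop encircling $Z$ in $Y$, which bounds a disk locally in $Y$ and is therefore null-homotopic in $Y\smallsetminus Z$ by the same purity statement. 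Thus the Van Kampen presentation $\pi_1(\tilde Y)=\pi_1(\tilde Y\smallsetminus E)/\langle\text{meridians of }E\rangle$ collapses to $\pi_1(Y)$. Applying this step by step to the iterated blow-ups producing $X[n]$, whose centers all have complex codimension $\ge d\ge 2$, yields $\pi_1(X[n])=\pi_1(X^n)=\pi_1(X)^n$.

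The free action of $S_n$ on $X[n]$ produces a short exact sequence
$$1\longrightarrow \pi_1(X)^n\longrightarrow \pi_1(X\langle n\rangle)\longrightarrow S_n\longrightarrow 1,$$
which I identify with $1\to P_n(X)\to B_n(X)\to S_n\to 1$ via the open inclusion $SC_n(X)\hookrightarrow X\langle n\rangle$. This inclusion induces a map of extensions that is the identity on the quotient $S_n$ and, on kernels, is the map $\pi_1(C_n(X))\to\pi_1(X[n])$; both groups equal $\pi_1(X)^n$ (by Theorem \ref{thm:birman} and the previous paragraph), and the map is the identity under this identification because it is induced by an open embedding whose complementary divisors have null-homotopic meridians. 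The five lemma then yields $\pi_1(X\langle n\rangle)\cong B_n(X)\cong\pi_1(X)\wr S_n$, and $X\langle n\rangle$ is projective, so the theorem follows. The only genuine external input is the Fulton--MacPherson construction with its smoothness, projectivity, and free $S_n$-action on $X[n]$; this is the main place where I am not supplying details, and everything else is bookkeeping via Lemma \ref{lemma:purity}, Theorem \ref{thm:birman}, and the five lemma.
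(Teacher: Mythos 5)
Your reduction of the theorem to the projectivity claim is fine, but the construction you propose for the projectivity does not work: the $S_n$-action on the Fulton--MacPherson compactification $X[n]$ is \emph{not} free, and this is precisely the obstruction that any naive compactification of $SC_n(X)$ runs into. Already for $n=2$, $X[2]$ is the blow-up of $X^2$ along the diagonal $\Delta$, the exceptional divisor is $\PP(N_{\Delta/X^2})\cong\PP(T_X)$, and the involution swapping the factors acts on $N_{\Delta/X^2}$ (identified with $T_X$ via $(v,w)\mapsto v-w$) by $-1$, hence acts \emph{trivially} on the projectivization: the entire exceptional divisor is fixed pointwise. Consequently there is no short exact sequence $1\to\pi_1(X[n])\to\pi_1(X[n]/S_n)\to S_n\to 1$; the cover $X[n]\to X[n]/S_n$ is branched along the boundary divisors, and the meridian of the branch divisor in $X[n]/S_n$ is exactly the half-twist generator of $B_n(X)$, which therefore dies in $\pi_1(X[n]/S_n)$. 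For instance, if $X$ is simply connected of dimension $\ge 2$, then $B_n(X)\cong S_n$, while $X[n]/S_n$ is simply connected, so your candidate variety has the wrong fundamental group. The same problem afflicts any smooth projective compactification of $SC_n(X)$ whose boundary contains a divisor lying over the discriminant: filling in that divisor kills its meridian, i.e.\ the braid generator.

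The paper circumvents this by never compactifying $SC_n(X)$ at all. After replacing $X$ by $X\times\PP^1$ so that $d\ge 3$, the image $D$ of the diagonal in the (singular, but projective) symmetric product $Y=S^nX$ has codimension $\ge 3$; a surface $S$ cut out by $\dim Y-2$ general hyperplane sections then misses $D$ entirely, hence lies in the smooth open locus $SC_n(X)$ and is itself smooth by Bertini, and the Lefschetz theorem for quasi-projective varieties (Hamm--L\^e, or Goresky--MacPherson) gives $\pi_1(S)\cong\pi_1(Y-D)=B_n(X)$. So the projective manifold realizing $B_n(X)$ is a generic linear section of the symmetric product, not a quotient of a compactified configuration space. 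If you want to salvage your line of argument, you would need a smooth projective $S_n$-variety containing $C_n(X)$ equivariantly with complement of \emph{codimension at least two} on which the action is free; Lemma \ref{lemma:purity} would then apply, but producing such a compactification is essentially as hard as the Lefschetz argument it would replace.
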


\begin{proof}
By corollary \ref{cor:birman}, $B_n(X)$ is a wreath product.
  The only thing to check is that $B_n(X)$ is projective.
After replacing $X$ by $X\times \PP^1$, we can assume $d\ge 3$. Note
that replacing $X$ by $ X\times \PP^1$ will not effect
$\pi_1(X)$, and consequently not $B_n(X)$. The variety $Y=S^nX$ is
projective, and the image $D$ of $\Delta(X)$ has codimension at least
$3$. Choose an embedding $Y\subset \PP^N$. Let $S\subset Y$ be a
surface obtained intersecting $Y$ with $\dim Y-2$ hyperplanes in
general position. Observe that $S\subset Y-D= SC_n(X)$, so it is
smooth by Bertini.
By the Lefschetz theorem of Hamm-Le \cite[thm 1.1.3]{hl}, or
Goresky-Macpherson \cite[p 153]{gm},
 $\pi_1(S)\cong \pi_1(Y-D)\cong B_n(X)$. (As an aside, we remark that the
 statement in \cite{gm} is more general, but harder to use ``out of the box''.)
\end{proof}

\begin{cor}
  Suppose that $H$ is projective,  $G$ is a  finite group, and $I$ is a finite faithful $G$-set.
The wreath product $H\wr_I G$ is projective.
\end{cor}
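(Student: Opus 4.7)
The plan is to chain together the previous theorem with the observation already recorded in the paper that an arbitrary finite wreath product embeds as a finite index subgroup of a standard $S_n$-wreath product, and then appeal to the fact that the class of projective groups is closed under passing to finite index subgroups.

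First I would set $n=|I|$ and, using the faithful $G$-action on $I$ to embed $G \hookrightarrow S_n$, realize $H \wr_I G$ as a subgroup of $H \wr S_n$. The index is $[S_n:G]$, so it is finite. (This is the observation made in the paragraph preceding Theorem~\ref{thm:3}.) Next, since $H$ is projective by hypothesis, write $H = \pi_1(X)$ for some projective manifold $X$; replacing $X$ by $X\times\PP^1$ if necessary, I may assume $\dim_\C X \ge 2$ without changing $\pi_1$, so that Theorem~\ref{thm:3} applies. Then $H \wr S_n = \pi_1(X) \wr S_n \cong B_n(X)$ is projective, i.e.\ there is a projective manifold $M$ with $\pi_1(M) \cong H \wr S_n$.

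Finally, the subgroup $H \wr_I G \le H \wr S_n = \pi_1(M)$ of finite index corresponds, by covering space theory, to a finite connected étale cover $\widetilde{M} \to M$ with $\pi_1(\widetilde{M}) \cong H \wr_I G$. Since finite étale covers of projective manifolds are again projective (the pullback of an ample line bundle under a finite morphism is ample), $\widetilde{M}$ is a projective manifold realizing $H \wr_I G$ as a projective group.

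The argument is essentially an assembly of pieces already established, so there is no serious obstacle; the only step that is not formal group theory is the preservation of projectivity under finite étale covers, and this is a standard consequence of ampleness being preserved by pullback along finite maps.
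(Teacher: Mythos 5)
Your proof is correct and follows essentially the same route as the paper's: embed $H\wr_I G$ as a finite-index subgroup of $H\wr S_n$ via the faithful action, realize $H\wr S_n\cong B_n(X)$ as a projective group using Theorem~\ref{thm:3}, and pass to the corresponding finite \'etale cover. The only cosmetic difference is that the paper crosses with $\PP^2$ rather than $\PP^1$, which also handles the degenerate case where one might take $X$ to be a point (a single factor of $\PP^1$ would then leave $\dim_\C X=1$).
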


\begin{proof}
 Suppose that $H=\pi_1(X)$, with $X$ a smooth projective manifold. After replacing $X$ by $X\times \PP^2$,
 we can assume $\dim X\ge 2$. The theorem implies that $ H \wr S_n$ is
 projective.  The result for more general
 wreath products follows from this, the above remarks, and the fact
 that a finite index subgroup of a projective group is also  projective.
\end{proof}

\begin{rmk}\label{rmk:Serre}
  When $H$ is trivial, this recovers an old result of Serre \cite{serre} that a
  finite group is projective. Actually, the result is not explicitly stated, but it is a (known) corollary of \cite[prop
  15]{serre} and the weak Lefschetz theorem.
 Later on, Shafarevich \cite[chap IX, \S 4.2]{shafarevich},
 perhaps unaware of Serre's implicit result, gave a direct and elementary proof of this.
 Our argument is closely related to the one used by Shafarevich.
\end{rmk}

\section{Characteristic $p$}

Fix an algebraically closed field $k$. Given a
variety $X$ defined over $k$, Grothendieck \cite{sga} defined its \'etale
fundamental group $\pi^{et}_1(X)$, which is a profinite group. When $k=\C$, $\pi^{et}_1(X)$
 is the profinite completion of the usual fundamental group.
Therefore, when $X$ is smooth and projective, all the standard results
about K\"ahler groups, including the ones proved so far in this paper,
can be carried over.

Now suppose that $k$ has characteristic $p>0$. The first
thing to observe is that the $p$-part of the fundamental group can be a bit pathological.
It is better,  for our purposes, to consider the maximal pro-(prime to
$p$)  quotient $\pi_1^{p'}(X)$. 
Let $\cP(p)$ denote the class of pro-finite groups that can arise as
$\pi_1^{p'}(X)$, for a smooth projective variety defined over $k$. (As
the notation suggests, this does not depend on $k$ \cite[prop
1.1]{arapura-p}.)
 The author \cite[thm 3.2]{arapura-p} has found an analogue of  the
 result of \cite{abr} used above. Therefore the arguments used in
 lemmas \ref{lemma:artin} and \ref{lemma:P1}  can be easily modified to show that:

 \begin{prop} Suppose that $p>2$.
   If $G= P_n(\RR^2)$ with $n\ge 2$, or if $G= P_n(0)$ with $n\ge
   4$, then the pro-(prime to $p$) completion of $G$ does not lie in $\cP(p)$. 
 \end{prop}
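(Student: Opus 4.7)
The plan is to imitate the proofs of lemmas \ref{lemma:artin} and \ref{lemma:P1}, substituting \cite[thm 3.2]{arapura-p} for the Bressler-Ramachandran-Arapura result \cite[cor 4.3]{abr}. The analog states (in the appropriate profinite formulation) that a group in $\cP(p)$ cannot be an extension of a group with infinitely many ends by a topologically finitely generated group. The Fadell-Neuwirth sequence \eqref{eq:FN} furnishes exactly such extensions, with nonabelian free groups---which have infinitely many ends, a property surviving prime-to-$p$ completion---appearing as kernels. A second ingredient is that $\cP(p)$ is closed under passage to finite-index subgroups of index coprime to $p$, since these correspond to connected finite \'etale covers of a smooth projective witness; this is where the hypothesis $p>2$ enters, to accommodate the index-$2$ subgroup appearing in the $\PP^1$ argument.

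For $G=P_n(\R^2)$ with $n\ge 3$, lemma \ref{lemma:FN} and \eqref{eq:FN} produce
$$1\to F_{n-1}\to P_n(\R^2)\to P_{n-1}(\R^2)\to 1,$$
where the kernel $F_{n-1}=\pi_1(\R^2\setminus\{x_1,\dots,x_{n-1}\})$ is free of rank $n-1\ge 2$ (hence has infinitely many ends) and the quotient is finitely generated. Passing to pro-(prime to $p$) completions preserves exactness since free groups are cohomologically good, and \cite[thm 3.2]{arapura-p} yields $\hat G^{p'}\notin\cP(p)$. The case $n=2$ is special: $P_2(\R^2)\cong\Z$ and its pro-(prime to $p$) completion is $\prod_{\ell\ne p}\Z_\ell$. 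If this were $\pi_1^{p'}(X)$ for a smooth projective $X$ over $k$, then $\dim_{\Q_\ell}H^1(X,\Q_\ell)=1$ for every $\ell\ne p$, contradicting the evenness of the first $\ell$-adic Betti number (which equals $2\dim\mathrm{Alb}(X)$).

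For $G=P_n(\PP^1)$ with $n\ge 4$, the identities $P_3(\PP^1)=\Z/2\Z$ and $\pi_2(C_3(\PP^1))=0$ allow \eqref{eq:FN} with $m=3$ to produce
$$1\to K\to P_n(\PP^1)\to\Z/2\Z\to 1,\qquad K=P_{n-3}(\PP^1\setminus\{x_1,x_2,x_3\}).$$
Since $\PP^1\setminus\{x_1,x_2,x_3\}$ is noncompact, iterating \eqref{eq:FN} expresses $K$ either as $F_2$ itself (when $n=4$), or as an extension with kernel $F_{n-2}$ (nonabelian free, hence infinitely many ends) and finitely generated quotient (when $n\ge 5$). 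In either case \cite[thm 3.2]{arapura-p} gives $\hat K^{p'}\notin\cP(p)$. Because $p>2$, the index-$2$ subgroup $K$ corresponds to a connected \'etale double cover of any hypothetical smooth projective witness for $\hat G^{p'}$, so $\hat G^{p'}\in\cP(p)$ would force $\hat K^{p'}\in\cP(p)$---a contradiction.

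The principal obstacle is the bookkeeping around the pro-(prime to $p$) completion: one must verify that the Fadell-Neuwirth short exact sequences persist after completion (a consequence of the goodness of free groups), and that ``infinitely many ends,'' as formulated in \cite{arapura-p} for profinite groups, transfers from the discrete $F_{n-1}$ to its prime-to-$p$ completion. The author's assertion that the modification is ``easy'' signals that these points are directly covered by the setup of \cite{arapura-p}, so the translation should go through without new ideas.
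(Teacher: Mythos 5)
Your proposal is correct and follows essentially the same route the paper intends: it transplants the extension arguments of Lemmas \ref{lemma:artin} and \ref{lemma:P1} to the prime-to-$p$ setting via \cite[thm 3.2]{arapura-p}, using the Fadell--Neuwirth sequences to exhibit the required extensions by infinite-ended free kernels and the hypothesis $p>2$ to pass to the index-$2$ subgroup in the $\PP^1$ case. Your separate treatment of $P_2(\R^2)\cong\Z$ via the odd first $\ell$-adic Betti number is a worthwhile explicit detail (the two-ended group $\Z$ is not covered by the extension criterion), and it matches the reason the Kähler case works in \cite[\S 3]{arapura}.
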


 We end with the following question.

\begin{quest}
 If  $G=P_n(g), g>0, n\ge 2$, does the pro-(prime to $p$) completion of $G$  lie in $\cP(p)$? 
\end{quest}

Our proof of theorem~\ref{thm:hypRS} for these cases used the theorem
Beauville-Catanese-Siu and theorem \ref{thm:beauville}. These
results rely on consequences of
Hodge theory that can fail in positive characteristic, so it is not clear how to adapt these arguments.


\begin{thebibliography}{123}

 \bibitem[ABC]{abc} J. Amoros, M. Burger, K. Corlette, D. Kotschick,
   D. Toledo {\em Fundamental groups of compact K\"ahler manifolds},
American Mathematical Society, Providence, RI, 1996. xii+140 pp.

 \bibitem[ABR]{abr} D. Arapura, P. Bressler, M. Ramachandran {\em On
     the fundamental group of a compact Kähler manifold.}
 Duke Math. J. 68 (1992), no. 3, 477–488



 \bibitem[A1]{arapura} D. Arapura, {\em Fundamental groups of smooth
     projective varieties} Current topics in complex algebraic
   geometry, 1–16, Math. Sci. Res. Inst. Publ., 28, Cambridge Univ. Press, Cambridge, 1995.



\bibitem[A2]{arapura2} D. Arapura, {\em Geometry of  cohomology
    support loci I} J. Algebraic Geom. 6 (1997), no. 3, 563–597. 

\bibitem[A3]{arapura-p} D. Arapura, {\em Restrictions on the prime-to-{$p$} fundamental group of a
              smooth projective variety}, Compos. Math.
    151, 2015, 1083--1095


\bibitem[Be]{beauville} A. Beauville, {\em Annulation du $H^1$ pour
    les fibrés en droites plats. } Complex algebraic varieties (Bayreuth, 1990),
    Lecture Notes in Math.,1507,1--15, Springer, 1992

\bibitem[Bi]{birman} J. Birman, {\em Braids, links, and mapping class
    groups}
 Annals of Mathematics Studies, No. 82. Princeton University Press,
 Princeton, N.J.; 
University of Tokyo Press, Tokyo, 1974. ix+228 pp. 



\bibitem[C]{catanese} F. Catanese, {\em Fibred K\"ahler and
    quasi-projective groups}, Special issue dedicated to Adriano Barlotti. Adv. Geom. 2003, suppl., S13–S27.

 \bibitem[D]{deligne} P. Deligne, {\em Th\'{e}orie de {H}odge. {II}}, Institut des Hautes \'{E}tudes Scientifiques. Publications
              Math\'{e}matiques, 40, {1971}, 5--57

\bibitem[FN]{fn} E. Fadell, L. Neuwirth, {\em Configuration spaces} 
Math. Scand. 10 (1962), 111-118. 

\bibitem[GL]{gl} M. Green, R. Lazarsfeld, {\em Higher obstructions to deforming cohomology groups of line bundles.} J. Amer. Math. Soc. 4 (1991), no. 1, 87–103.

\bibitem[GH]{gh} P. Griffiths, J. Harris, {\em Principles of algebraic
    geometry}, Pure and Applied Mathematics. Wiley-Interscience, New York, 1978. xii+813 pp

\bibitem[GM]{gm} M. Goresky, R. Macpherson, {\em Stratified Morse
    theory}, Springer-Verlag, Berlin, 1988. xiv+272 pp.

\bibitem[G]{sga} A. Grothendieck et al., {\em S\'eminaire de G\'eométrie
    Alg\'ebrique du Bois Marie – 1960–61 – Revetements \'etale et
    groupe fondamental (SGA1),}  Lecture Notes in Mathematics, vol. 224 (Springer, Berlin, 1971). 

\bibitem[HL]{hl} H. Hamm, Le D. T., {\em Lefschetz theorems on
    quasi-projective varieties} Bull. Soc. Math. France 113 (1985), no. 2, 123–142. 

\bibitem[H]{hatcher} A. Hatcher, {\em Algebraic topology}, Cambridge University Press, Cambridge, 2002. xii+544 pp. 

\bibitem[LM]{lm} A. Lubotzky, A. Magid, {\em Varieties of
    representations of finitely generated groups.}
 Mem. Amer. Math. Soc. 58 (1985), no. 336, xi+117 pp

\bibitem[K]{kowalski} E. Kowalski, {\em An introduction to
    representation theory}. American Mathematical Society, Providence, RI, 2014. vi+432 

 \bibitem[Se]{serre} J.P. Serre, {\em Sur la topologie des
     vari\'et\'es alg\'ebriques en charact\'eristique p,}
   International symposium on algebraic topology pp. 24–53,
UNESCO, 1958

\bibitem[Sh]{shafarevich} I. Shafarevich, {\em Basic algebraic
    geometry}. Springer-Verlag, New York-Heidelberg, 1974. xv+439 pp. 

\bibitem[T]{totaro} B. Totaro, {\em Configurations spaces of algebraic
    varieties}, Topology 35 (1996), no. 4, 1057–1067. 

 \end{thebibliography}
\end{document}